\numberwithin{equation}{section}
\def\XXint#1#2#3{{\setbox0=\hbox{$#1{#2#3}{\int}$ }
\vcenter{\hbox{$#2#3$ }}\kern-.6\wd0}}
\newcommand*{\rom}[1]{\expandafter\@slowromancap\romannumeral #1@}
\newcommand{\prim}{\text{prim}}
\newcommand{\SL}{\mathrm{SL}}
\newcommand{\M}{\mathrm{M}}
\newcommand{\X}{\mathcal{X}}
\newcommand{\Y}{\mathcal{Y}}
\newcommand{\R}{\mathbb{R}}
\newcommand{\e}{\varepsilon}
\newcommand{\tF}{\tilde{F}}
\newcommand{\Pp}{\mathcal{P}}
\newcommand{\z}{\mathcal{Z}}
\newcommand{\Z}{\mathbb{Z}}
\newcommand{\N}{\mathbb{N}}
\newcommand{\bthm}{\begin{thm}}
\newcommand{\ethm}{\end{thm}}
\newcommand{\bproof}{\begin{proof}}
\newcommand{\eproof}{\end{proof}}
\newcommand{\blem}{\begin{lem}}
\newcommand{\elem}{\end{lem}}
\newcommand{\brem}{\begin{rem}}
\newcommand{\erem}{\end{rem}}
\newcommand{\eeqn}{\end{equation}}
\newcommand{\eeqnn}{\end{equation*}}
\newcommand{\beqn}{\begin{equation}}
\newcommand{\beqnn}{\begin{equation*}}
\newcommand{\eprop}{\end{prop}}
\newcommand{\eexm}{\end{exm}}
\newcommand{\enexm}{\end{nexm}}
\newcommand{\ecor}{\end{cor}}
\newcommand{\bcor}{\begin{cor}}
\newcommand{\bexm}{\begin{exm}}
\newcommand{\bnexm}{\begin{nexm}}
\newcommand{\bprop}{\begin{prop}}
\newcommand{\bdefn}{\begin{defn}}
\newcommand{\edefn}{\end{defn}}
\newcommand{\benum}{\begin{enumerate}}
\newcommand{\eenum}{\end{enumerate}}
\newcommand{\cN}{\mathcal{N}}
\newcommand{\Mat}{\M_{m \times n}(\R)}
\newcommand{\supp}{\text{supp}}
\newcommand{\cum}{\text{Cum}}
\title{L\'{e}vy-Khintchine Theorems: effective results and central limit theorems}
\begin{document}
\theoremstyle{plain}
\newtheorem{thm}{Theorem}[section]
\newtheorem{lem}[thm]{Lemma}
\newtheorem{prop}[thm]{Proposition}
\newtheorem{cor}[thm]{Corollary}
\newtheorem{question}{Question}
\newtheorem{con}{Conjecture}
\theoremstyle{definition}
\newtheorem{defn}[thm]{Definition}
\newtheorem{exm}[thm]{Example}
\newtheorem{nexm}[thm]{Non Example}
\newtheorem{prob}[thm]{Problem}

\theoremstyle{remark}
\newtheorem{rem}[thm]{Remark}

\author*[1]{\fnm{Gaurav} \sur{Aggarwal}}\email{gaurav@math.tifr.res.in}
\author[2]{\fnm{Anish} \sur{Ghosh}}\email{ghosh@math.tifr.res.in}

\affil[1]{
  \orgdiv{School of Mathematics}, 
  \orgname{Tata Institute of Fundamental Research}, 
  \orgaddress{\city{Mumbai}, \postcode{400005}, \country{India}}
}

\affil[2]{
  \orgdiv{School of Mathematics}, 
  \orgname{Tata Institute of Fundamental Research}, 
  \orgaddress{\city{Mumbai}, \postcode{400005}, \country{India}}
}

\date{}


\keywords{Diophantine approximation, L\'{e}vy-Khintchine theorem, Central Limit Theorems, Flows on homogeneous spaces}


\abstract{  
The L\'evy–Khintchine theorem is a classical result in Diophantine approximation that describes the asymptotic growth of the denominators of convergents in the continued fraction expansion of a typical real number. An effective version of this theorem was proved by Phillip and Stackelberg (\textit{Math. Annalen}, 1969) and Central Limit Theorems were proved by several authors \cite{Ibragimov, Misevicius, Morita, Vallee}. In this work, we develop a new approach towards quantifying the L\'evy–Khintchine theorem. Our methods apply to the setting of higher-dimensional simultaneous Diophantine approximation, thereby providing an effective version of a theorem of Cheung and Chevallier (\textit{Annales scientifiques de l'ENS}, 2024). Further, we prove a Central Limit Theorem for best approximations in all dimensions. Unlike previous approaches to the one-dimensional problem, our approach relies on techniques from homogeneous dynamics. 
\\ \\
\textbf{MSC 2020:} 11J13, 11K60, 37A17, 60F05.
}

\maketitle


\section{Introduction}
For a real number $\theta$, we denote its continued fraction expansion as usual:
\begin{align*}
    \theta = a_0+ \frac{1}{a_1 + \frac{1}{a_2 + \frac{1}{a_3 + \frac{1}{\ddots}}}}, 
\end{align*}

where

\begin{align*}
    \frac{p_k}{q_k}= a_0+ \frac{1}{a_1 + \frac{1}{a_2 + \frac{1}{ \ddots + \frac{1}{a_k}}}}
\end{align*}

\noindent denotes the $k$th convergent. A classical and beautiful theorem of Khintchine \cite{Khintchine36} states that for Lebesgue a.e. $\theta$:
\begin{equation}\label{LK}
\lim_{n \rightarrow \infty} \frac{\log q_k}{k} = \frac{\pi^2}{12 \log 2}.
\end{equation}

More precisely, Khintchine established that the limit was constant almost surely, while the value of the constant was computed by L\'{e}vy \cite{Levy36}. This theorem is therefore called the L\'{e}vy-Khintchine theorem.  Subsequently, this theme has been extensively studied. In a probabilistic direction, central limit theorems were established by Ibragimov \cite{Ibragimov}, Misevicius \cite{Misevicius} and by Morita \cite{Morita} and Vall\'{e}e \cite{Vallee}, and a law of iterated logarithm, which effectivises the L\'evy–Khintchine theorem was proved by Philipp and Stackelberg \cite{PhilippStackelberg}, following earlier results by LeVeque \cite{LeVeque58} and Philipp \cite{Philipp67, PhilippDuke}. Large deviation results were obtained in \cite{Takahasi}. There has been significant work on characterising numbers via their L\'{e}vy constants, see \cite{PollicottWeiss},  \cite{BugeaudKimLee} and the references therein.\\

An important recent development is the generalization of the L\'evy-Khintchine theorem to higher dimensional simultaneous Diophantine approximation by Cheung and Chevallier \cite{CC19}. Subsequently, the present authors resolved Cheung's conjecture and proved several related results in \cite{AG24Levy}. However, all the higher dimensional work thus far has been qualitative. This bring up the following fundamental questions:\\

\noindent \textbf{Is it possible to effectivise the Cheung-Chevallier theorem?}\\ 

\noindent \textbf{Can one obtain central limit theorems for best approximations in simultaneous Diophantine approximation?}\\

\noindent In this paper, we answer both these fundamental questions in the affirmative. We provide an \emph{effective} L\'evy-Khintchine theorem in simultaneous approximation. We also prove central limit theorems for the growth of best approximations. In a companion paper \cite{aggarwalghosh25}, we explore quantitative refinements of Doeblin-Lenstra theorems.\\ 

We begin by introducing some notation.\\

\subsection{Statement of results in higher dimension}
Fix $m,n$ to be positive integers and norms $\|.\|_{\R^m}$ on $\R^m$ and $\|.\|_{\R^n}$ on $\R^n$. An integer vector $(p,q) \in \Z^m \times (\Z^n \setminus \{0\})$ is called a best approximation of a matrix $\theta \in \Mat$, corresponding to norms $\|.\|_{\R^m}$ and $\|.\|_{\R^n}$, if there is no integer solution $(p',q') \in \Z^m \times (\Z^n \setminus \{0\})$ other than $(p,q)$ and $(-p,-q)$ to the following inequalities
    \begin{align*}
        \|p'+ \theta q'\|_{\R^m} &\leq \|p+ \theta q\|_{\R^m}, \\
        \|q'\|_{\R^n} & \leq \|q\|_{\R^n}.
    \end{align*}
    We define $\cN(\theta, T)$ as the number of best approximations $(p,q) \in \Z^m \times (\Z^n \setminus \{0\})$ of $\theta$ such that $\|q\|_{\R^n} \leq e^T$.

    Note that for a fixed $\theta$, if $(p_j,q_j)_{j \in \N} \in \Z^m \times (\Z^n \setminus \{0\})$ denote the best approximates of $\theta$, then for $T= \log \|q_k\|$, we have
        \begin{align*}
            \cN(\theta, T)= k,
        \end{align*}
        since $\{(p_1, q_1), \ldots, (p_k,q_k)\}$ are exactly the best approximates $(p,q)$ of $\theta$ with $\|q\| \leq \|q_k\|= e^T $. This means that
        \begin{align*}
            \lim_{k \rightarrow \infty} \frac{\log\|q_k\|}{k}=  \lim_{T \rightarrow \infty}\frac{T}{ \cN(\theta, T)},
        \end{align*}
        provided the latter exists. In other words, the L\'evy-Khintchine theorem (Theorem 1 in \cite{CC19}) is a result about the asymptotic number of best approximates, namely
        \begin{align}
        \label{eq: levy khintchin as counting}
            \cN(\theta, T) \sim  \gamma T.
        \end{align}

Our first theorem establishes an explicit rate in equation \eqref{eq: levy khintchin as counting}, thereby making the L\'evy–Khintchine theorem effective. 
    \begin{thm}
    \label{thm: main effective levy}
    There exists  $\gamma>0$ such that the following holds. For any $\varepsilon > 0$, we have
    \begin{align}
        \label{eq: thm: main effective levy}
        \cN(\theta, T) = \gamma T + O_{\e, \theta}\left(T^{1/2} \log^{\frac{3}{2}+\varepsilon} T\right),
    \end{align}
    for Lebesgue almost every $\theta \in \Mat$.
\end{thm}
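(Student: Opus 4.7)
The plan is to realise $\cN(\theta, T)$ as a count of crossings of a transversal in the space of unimodular lattices $X = \SL_{m+n}(\R)/\SL_{m+n}(\Z)$, derive an $L^2$-variance bound via exponential mixing of the diagonal flow against unstable-horospherical slices, and convert that into the almost-sure rate through a Gal--Koksma-type strong law.

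First I would set up the dynamical encoding. For $\theta \in \Mat$, let $u_\theta \in \SL_{m+n}(\R)$ denote the unipotent block-upper-triangular matrix with $\theta$ in the upper-right block, set $\Lambda_\theta = u_\theta \Z^{m+n}$, and let $a_t = \diag(e^{t/n} I_m, e^{-t/m} I_n)$. Following the framework of Cheung--Chevallier \cite{CC19} and \cite{AG24Levy}, best approximations $(p,q)$ of $\theta$ with $\|q\| \le e^T$ are in bijection, up to an $O(1)$ additive error, with the times $t \in [0, T']$ (with $T' = T + O(1)$) at which the orbit $a_t \Lambda_\theta$ crosses a codimension-one transversal $\Ss \subset X$ parametrising lattices in which a new shortest vector appears in an appropriate convex cone. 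Hence $\cN(\theta, T) = \#\{t \in [0, T'] : a_t \Lambda_\theta \in \Ss\} + O(1)$. Thickening $\Ss$ to a slab of width $\rho$ and rescaling yields smooth test functions $\phi_\rho \in C_c^\infty(X)$ with $\int \phi_\rho \, d\mu_X = \gamma + O(\rho)$, and $\cN(\theta, T) \approx \int_0^T \phi_\rho(a_t \Lambda_\theta)\,dt$, the approximation error being governed by a local transversality estimate.

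Next I would estimate the variance. The main dynamical input is effective equidistribution of unstable-horospherical translates for the $a_t$-action, of the form
$$\int_\Omega f(a_t u_\theta x_0)\,d\theta = |\Omega| \int_X f\, d\mu_X + O\bigl( e^{-\lambda t} \|f\|_{C^k} \bigr),$$
for bounded $\Omega \subset \Mat$, some $\lambda > 0$, and an appropriate $C^k$ norm. A two-point version applied to $\phi_\rho(a_s\cdot)\phi_\rho(a_t\cdot)$ yields, after expanding the square and choosing $\rho$ as a small inverse power of $T$ to balance the $\rho^{-k}$ blow-up of $\|\phi_\rho\|_{C^k}$ against the $O(\rho)$ smoothing bias,
$$\int_\Omega \bigl( \cN(\theta, T) - \gamma T \bigr)^2\, d\theta \ \ll\ T.$$
Finally, the Gal--Koksma lemma applied to the discretisation $T_k = k \in \N$, combined with the monotonicity of $T \mapsto \cN(\theta,T)$ to interpolate between consecutive integers, converts this variance bound of order $T$ into the almost-sure asymptotic $\cN(\theta, T) - \gamma T = O(T^{1/2} \log^{3/2+\varepsilon} T)$, which is the conclusion of the theorem.

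The main obstacle is the non-compactness of $\Ss$: the transversal extends into the cusp of $X$, and as $\rho \to 0$ the norm $\|\phi_\rho\|_{C^k}$ explodes, so one must cut off by a level set of the Margulis $\alpha$-function and use quantitative non-divergence (Kleinbock--Margulis) to show that for almost every $\theta$ the orbit $\{a_t \Lambda_\theta\}$ spends a negligible (and quantitatively controlled) fraction of time deep in the cusp. Balancing this cusp cut-off against the smoothing scale $\rho$ and the mixing rate $e^{-\lambda t}$, while keeping the overall error linear in $T$ in the $L^2$ norm, is the delicate quantitative work underlying the $T^{1/2}\log^{3/2+\varepsilon}T$ error term.
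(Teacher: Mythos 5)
Your overall skeleton --- encode best approximations in the space of lattices, feed effective equidistribution of the translates $a_t u(\theta)\Gamma$ into a two-point (correlation) estimate, and finish with a Gal--Koksma-type strong law (the paper uses the Kleinbock--Shi--Weiss theorem, Theorem \ref{thm: KSW}, in exactly this role) --- is the same as the paper's. But there is a genuine gap at your first step, and it changes the difficulty of everything downstream. You propose to realise $\cN(\theta,T)$ as a continuous-time crossing count of a transversal $\Ss$, thicken $\Ss$ to a slab of width $\rho$, and assert $\cN(\theta,T)\approx \int_0^T \phi_\rho(a_t u(\theta)\Z^{m+n})\,dt$ with an error ``governed by a local transversality estimate''. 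That error must be controlled \emph{pointwise, for almost every} $\theta$, at scale $O(T^{1/2})$, since it enters the almost-sure conclusion directly; but the time spent in the slab per crossing is not uniformly comparable to $\rho$ (near-tangential crossings contribute much more), there are $\asymp T$ crossings up to time $T$, and your smoothing scale $\rho=\rho(T)$ shrinks with $T$, so the observable itself changes with $T$ --- which also undercuts the final step, because Gal--Koksma needs quasi-orthogonality of increments of a fixed process (second moments of $S_{M+N}-S_M$ bounded by $CN$ uniformly in $M$), not a single variance bound of order $T$. None of this is supplied. The paper is engineered to avoid precisely this: it defines a $\Z_{\geq 0}$-valued function $f$ on $\X$ counting the ``best'' lattice vectors $v$ with $1\leq\|\pi_2(v)\|<e$, so that the discrete-time Birkhoff sums $\sum_{i\leq\lfloor T\rfloor} f(a_i u(\theta)\Gamma)$ sandwich $\cN(\theta,T)$ \emph{exactly} (Lemma \ref{lem: correpondence with best approx}); smoothing ($f_\e=\phi_\e\cdot\eta_\e*f$) is used only inside integrals, to prove exponential decay of the correlations of $f$ itself (Lemma \ref{lem: l2 estimates}), which is the hypothesis the KSW theorem needs.

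A second structural point: the cusp difficulty you flag (Margulis function cut-off plus quantitative non-divergence) is real in your formulation but simply does not arise in the paper's, because $f$ is uniformly bounded on all of $\X$ by a packing argument (Lemma \ref{lem: bounded f}) --- there is no unbounded observable to truncate, only the mild cut-off $\phi_\e$ needed for compact support. To salvage your route you would need (i) an almost-everywhere quantitative bound on the number and geometry of near-tangential crossings at scale $\rho(T)$, and (ii) a version of the strong-law step valid for the $T$-dependent smoothed observables; alternatively, replace the thickened-transversal integral by the paper's exact counting function, after which your two-point equidistribution estimate becomes essentially Lemma \ref{lem: l2 estimates} and the rest of your outline goes through.
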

\vspace{0.2em}

    The second main theorem of this paper is a Central Limit Theorem for best approximations.
    \begin{thm}
    \label{thm: CLT general}
    Let $\mu$ denote the probability measure on $\M_{m \times n}([0,1])$ obtained by restriction of Lebesgue measure. Then there exists a $\sigma > 0$ such that for every $\xi \in \mathbb{R}$,
    $$
    \mu\left( \left\{ \theta \in \Mat : \frac{\cN(\theta, T) - \gamma T}{T^{1/2}} < \xi \right\} \right) \longrightarrow \mathrm{Norm}_{\sigma}(\xi),
    $$
    as $T \to \infty$, where
    $$
    \mathrm{Norm}_{\sigma}(\xi) := \frac{1}{\sigma \sqrt{2\pi}} \int_{-\infty}^\xi e^{-\frac{1}{2} \left( \frac{x}{\sigma} \right)^2} \, dx,
    $$
    and $\gamma$ is as in Theorem \ref{thm: main effective levy}.
\end{thm}

\vspace{0.2em}
\begin{rem}
In the case $m = n = 1$, the error term obtained by Philipp and Stackelberg \cite{PhilippStackelberg} is better than the one we obtain in Theorem \ref{thm: main effective levy}. Namely, they obtain an error $O(\sqrt{T\log\log T})$. Also, the one-dimensional central limit theorems cf. \cite{Misevicius, Morita, Vallee} come with a rate of convergence.
\end{rem} \vspace{0.2em}
    
\begin{rem} 
If we consider simultaneous ``$\varepsilon$" approximations in place of best approximations, then similar counting results have been known  by classic work of Wolfgang Schmidt \cite{S60} (see also \cite{WYYK}) but with a worse error term, namely $O(T^{1/2}\log^{2+\varepsilon}T)$. Also, the Central Limit Theorems in this context have been established by Dolgopyat, Fayad and Vinogradov \cite{DFV} as well as Bj\"{o}rklund and Gorodnik \cite{BG}, see also \cite{AG23}. We refer the reader also to \cite{DFL22} for a wealth of related results.

However, we emphasize that Theorems~\ref{thm: main effective levy} and~\ref{thm: CLT general} address entirely different questions. In particular, they neither follow from any of these works, nor have any implication toward them. 
\end{rem}\vspace{0.2em}

The main novelty of this paper lies in establishing \emph{quantitatively stronger effective results} for the L\'evy–Khintchine theorem than those obtained in the qualitative frameworks of \cite{SW22,CC19,AG24Levy}. These results are achieved through arguments that rely on tools significantly simpler than the heavy cross-section machinery developed in those earlier works.  

At a conceptual level, the key insight of the paper is that the L\'evy–Khintchine theorem can be viewed as describing the convergence of Birkhoff sums of a certain function \( f \) to its space average, under an ergodic transformation on the space of lattices, for almost every point of the relevant submanifold. This reinterpretation offers a more flexible and powerful framework, allowing the classical (effective) L\'evy–Khintchine theorem—previously established only for almost every point on the real line—to be extended to far more delicate settings, such as almost every point on the middle-third Cantor set, and more generally, on self-similar fractals. Such extensions appear to be beyond reach using the cross-section techniques of \cite{SW22,CC19,AG24Levy} or the continued fraction methods of \cite{PhilippStackelberg,Misevicius,Morita,Vallee}. The study of Diophantine approximation on the middle-third Cantor set, which motivates this direction, stems from a question of Mahler~\cite{Mahler}. We anticipate that this new ergodic-theoretic perspective will yield further advances, including laws of large deviations and other probabilistic limit theorems, extending both the L\'evy–Khintchine framework and the scope of Mahler’s problem. We refer the reader to Theorem \ref{thm: main effective levy general} and Remark~\ref{rem: Cantor} in Section~\ref{sec: Main results} below. 

Once the reduction to Birkhoff sums is established, our strategy for obtaining effective estimates follows the general spirit of \cite{KSW}, while the arguments related to the central limit theorem are inspired by \cite{BG}. However, a direct adaptation of these results is not possible, since in our setting the observable \( f \) is \emph{discontinuous} and the underlying measure may fail to be absolutely continuous with respect to Lebesgue measure. The main technical contribution of this paper is to overcome these obstacles by analyzing the \emph{average continuity} properties of \( f \)—see Lemma~\ref{lem: perturbed difference},~\ref{lem: approximation f error}. We show that although \( f \) is discontinuous, it exhibits continuity on average, in the sense that  
\[
\int |f(a_t u(\theta)\Gamma) - f(g a_t u(\theta)\Gamma)| \, d\mu(\theta) \to 0 
\quad \text{as } g \to e \text{ and } t \to \infty,
\]
provided \( t > -c_0 \log d(g,e) \) for some constant \( c_0 \), where \( d(\cdot,\cdot) \) denotes the distance on \( \SL_{m+n}(\R) \). The proof of this average continuity property forms the technical core of the paper.  

Building on this foundation, we prove Theorem~\ref{thm: main abstract theorem}, extending the results of \cite{KSW,BG} by establishing effective estimates and a central limit theorem for Birkhoff sums of discontinuous functions and measures that may be singular with respect to Lebesgue measure. The crucial assumptions are the average continuity of \( f \) (as above) and the validity of the condition (EMEI), see Section~\ref{sec: Main results} for the definition.  

While the overall structure of the proof of Theorem~\ref{thm: main abstract theorem} follows the general approach of \cite{KSW,BG}, two substantive modifications are required. First, we control the error terms arising from the convolution kernels used to regularize \( f \). Second, we identify that the essential ingredient in \cite{KSW,BG} is not the absolute continuity of the measure, but rather the (EMEI) condition. Aside from these innovations, our argument shares several technical features with the previous works.

In summary, the present work advances the theory in two principal directions. It introduces a new ergodic-theoretic framework that enables effective L\'evy–Khintchine results to be established on fractal sets, and it extends the methods of \cite{KSW,BG} to encompass discontinuous observables and singular measures under the (EMEI) condition.

\section{Main Results}
\label{sec: Main results}
 Let $G= \SL_{m+n}(\R)$ and $\Gamma = \SL_{m+n}(\Z)$. Let $d(\cdot, \cdot)$ denote a right invariant distance function on $G$, and let $m_G$ denote the Haar measure on $G$ normalised so that the fundamental domain of the $\Gamma$ action on $G$ has measure equal to $1$. We denote by $\X $ the homogeneous space $G/\Gamma$ which, as is well known, can be identified with the space of unimodular lattices in $\R^{m+n}$ via the identification 
    $$
    A\Gamma \mapsto A\Z^{m+n}.
    $$
    Let $\mu_\X$ denote the unique $G$-invariant probability measure on $\X$. \\
    
    For $\theta \in \Mat$ and $t \in \R$, let us define $ u(\theta), a_t \in G$ as
    \begin{align}
        u(\theta) = \begin{pmatrix}
            I_m & \theta \\ & I_n
        \end{pmatrix}, \quad  a_t = \begin{pmatrix}
            e^{\frac{n}{m}t} I_m \\ & e^{-t} I_n
        \end{pmatrix}.
    \end{align}

\begin{defn}
    A probability measure $\mu$ on $\Mat$ is said to satisfy Condition (EMEI) (short for \emph{Effective Multi-equidistribution for the Identity coset under the diagonal flow $a_t$}) if it satisfies the following properties:
    \begin{itemize}
        \item $\mu$ is compactly supported,
        \item there exists a $k \in \N$ such that for all $r \in \N$, there exists a $ \delta_r>0$ such that the following holds: for all $F_0 \in C^{\infty}(\Mat )$, $F_1, \ldots, F_r \in C_c^{\infty}(\X)$ and $t_1, \ldots,t_r >0 $, we have 
    \begin{align}
         \int_{\Mat} F_0(\theta) \left( \prod_{i=1}^r F_i(a_{t_i} u(\theta) \Gamma)\right) \, d\mu(\theta) &=  \mu(F_0) \mu_\X( F_1) \cdots \mu_\X(F_r) \nonumber\\
         & \quad + {O}_{ r} \left(e^{-\delta_r D(t_1, \ldots, t_r)} \|F_0\|_{C^k}  \prod_{i=1}^r \|F_i\|_{C^k} \right),  \label{eq: mix hom identity}
    \end{align}
    where $D(t_1, \ldots, t_r) = \min\{t_i, |t_i - t_j|: 1\leq i ,j \leq r, i \neq j\}$.
    \end{itemize}
\end{defn}

\begin{rem}
\label{rem: class of measures satifying EMEI}
    It is well known that if $\mu$ equals the restriction of the Lebesgue measure to $\M_{m \times n}([0,1])$, then $\mu$ satisfies the condition \textnormal{(EMEI)}, see \cite[Cor.~3.5]{KM3}. In fact, it is striking that even measures singular to Lebesgue, such as the Hausdorff measure supported on the middle-third Cantor set, can be shown to be \textnormal{(EMEI)}. This is proved in our paper \cite{aggarwalghosh25}.
\end{rem}

The main result of the paper is the following general Theorem from which Theorems \ref{thm: main effective levy} and \ref{thm: CLT general} follow immediately. 
    \begin{thm}
    \label{thm: main effective levy general}
    Fix $\mu$ on $\Mat$ satisfying condition (EMEI). Then there exists  $\gamma, \sigma>0$ (depending only on $m,n$ and choice of norms, and independent of $\mu$) such that the following holds.
    \begin{itemize}
        \item[(i)] For any $\varepsilon > 0$ and for $\mu$-almost every $\theta \in \Mat$, we have
    \begin{align}
        \label{eq: thm: main effective levy general}
        \cN(\theta, T) = \gamma T + O_{\e, \theta}\left(T^{1/2} \log^{\frac{3}{2}+\varepsilon} T\right).
    \end{align}
    \item[(ii)] For every $\xi \in \mathbb{R}$,
    $$
    \mu\left( \left\{ \theta \in \Mat : \frac{\cN(\theta, T) - \gamma T}{T^{1/2}} < \xi \right\} \right) \longrightarrow \mathrm{Norm}_{\sigma}(\xi),
    $$
    as $T \to \infty$, where
    $$
    \mathrm{Norm}_{\sigma}(\xi) := \frac{1}{\sigma \sqrt{2\pi}} \int_{-\infty}^\xi e^{-\frac{1}{2} \left( \frac{x}{\sigma} \right)^2} \, dx.
    $$
    \end{itemize}
\end{thm}
\vspace{0.2in}

\begin{rem}
    The constants \(\gamma, \sigma\) above, depend only on \(m\), \(n\) and the chosen norms on $\R^m$ and $\R^n$. In particular, these are given by the integrals in \eqref{eq: def gamma intro} and \eqref{eq: def sigma} where the function $f$ is defined by \eqref{eq: def f}. Our proof does not yield explicit values for $\gamma$ and $\sigma$. Indeed, it would be an interesting, albeit challenging problem to estimate these integrals, but we do not pursue it here. Very few results are known in this direction. In the case \(m = n = 1\), we have \(\gamma = \frac{24 \log 2}{\pi^2}\). In higher dimensions and for standard Euclidean norms, \(\gamma = \frac{2}{L_{m,n}}\), where \(L_{m,n}\) is the constant appearing in Theorem~1 of Cheung and Chevallier~\cite{CC19}. In both cases, the factor of \(2\) arises because we count with sign. Apart from dimension one, the only other known information about $\gamma$ is obtained from a separate paper of Cheung and Chevallier \cite{cheung2021valuedimensionallevysconstant} where they estimate $L_{2,1}$ using numerical integration and therefore obtain an estimate for $\gamma$. 
\end{rem}\vspace{0.2em}

\begin{rem}
\label{rem: Cantor}
    By \cite[Theorem~1.2]{BHZ2025} and \cite[Theorem~2.3]{aggarwalghosh25}, any non-atomic self-similar probability measure on \( M_{m \times 1}(\R) \) generated by similarities with a common contraction ratio satisfies condition (EMEI). Thus, Theorem~\ref{thm: main effective levy general} implies that Theorems~\ref{thm: main effective levy} and~\ref{thm: CLT general} remain valid when \(\mu\) is replaced by a broad class of fractal measures, including, in particular, the \({\log 2}/{\log 3}\)-dimensional Hausdorff measure supported on the middle-third Cantor set.
\end{rem}


\section{The function $f$}
    Throughout the paper, we fix norms $\|\cdot\|_m$ on $\mathbb{R}^m$ and $\|\cdot\|_n$ on $\mathbb{R}^n$. By rescaling these norms if necessary, we may assume that the unit balls $\{ x \in \mathbb{R}^m : \|x\|_m \leq 1 \}$ and $\{ x \in \mathbb{R}^n : \|x\|_n \leq 1 \}$ have Lebesgue measure greater than $2^m$ and $2^n$, respectively. For simplicity, we will use the same notation $\|\cdot\|$ for both norms $\|\cdot\|_m$ and $\|\cdot\|_n$.

Let $\pi_1: \mathbb{R}^{m+n} = \mathbb{R}^m \times \mathbb{R}^n \to \mathbb{R}^m$ and $\pi_2: \mathbb{R}^{m+n} = \mathbb{R}^m \times \mathbb{R}^n \to \mathbb{R}^n$ denote the natural projections. We define a norm on $\mathbb{R}^{m+n}$, also denoted by $\|\cdot\|$, as
\[
\|v\| := \max\left\{ \|\pi_1(v)\|,\, \|\pi_2(v)\| \right\} \quad \text{for } v \in \mathbb{R}^{m+n}.
\]
Given $v \in \mathbb{R}^{m+n}$, define the box
\begin{align}
    \label{eq: def C_v}
    C_v := \left\{ (x, y) \in \mathbb{R}^m \times \mathbb{R}^n : \|x\| \leq \|\pi_1(v)\|,\, \|y\| \leq \|\pi_2(v)\| \right\}.
\end{align}
We now define the function $f : \X \to \mathbb{Z}_{\geq 0}$ by
\begin{align}
    \label{eq: def f}
    f(\Lambda) := \#\left\{ v \in \Lambda :  \Lambda_\prim \cap C_v  = \{ \pm v \},\, 1 \leq \|\pi_2(v)\| < e,\, \|\pi_1(v)\| \leq 1 \right\}.
\end{align}
Note that if $v \in \Lambda \in \X$ satisfies $1 \leq \|\pi_2(v)\| < e$ and $\#\left( \Lambda_\prim \cap C_v \right) = 0$, then by Minkowski’s theorem, it must follow that $\|\pi_1(v)\| \leq 1$. Hence, the last condition in the definition of $f$ is, in fact, redundant.

The following lemma highlights the significance of $f$ by showing that the number of best approximations of $\theta$ is equal to the Birkhoff sum of $f$ for the action of $a_1$ starting from $u(\theta)\Gamma$. 
\begin{lem}
    \label{lem: correpondence with best approx}
    For all $T \geq 0$, we have
    \begin{align}
      \sum_{i=0}^{\lfloor T \rfloor-1 } f(a_i u(\theta) \Gamma) \leq \cN(\theta,T) \leq \sum_{i=0}^{\lfloor T \rfloor } f(a_i u(\theta) \Gamma),
    \end{align}
    where $\lfloor T \rfloor$ denotes the smallest integer less than or equal to $T$.
\end{lem}
\begin{proof}
It enough to show that the number of best approximations $(p,q)$ of $\theta$ with $e^M \leq \|q\|< e^{M+1}$ equals $f(a_M u(\theta) \Gamma)$ for all $M \in \Z_{\geq 0}$. To show this, we let $\text{Best}(\theta, a,b)$ denote the set of all best approximations of $\theta$ with $e^a \leq \|q\| < e^b$. For any lattice $\Lambda$, we define $S_{\Lambda}$ as the set of all $v \in \Lambda$ such that
        \begin{align*}
            \Lambda_\prim \cap C_v= \{\pm v\}, \quad \quad 1 \leq \|\pi_2(v)\| < e, \quad \quad \|\pi_1(v)\| \leq 1.
        \end{align*}
     Fix $M \in \N$. Note that $(p,q) \in \text{Best}(\theta, M, M+1)$ if and only if $e^M\leq \|q\| < e^{M+1}$ and there is no integer solution $(p',q') \in \Z^m \times (\Z^n \setminus \{0\})$ other than $(p,q)$ and $(-p,-q)$ to the following inequalities
    \begin{align*}
        \|p'+ \theta q'\|_{\R^m} &\leq \|p+ \theta q\|_{\R^m}, \\
        \|q'\|_{\R^n} & \leq \|q\|_{\R^n}.
    \end{align*}
    The latter holds if and only if $e^M\leq \|q\| < e^{M+1}$ and there are no primitive vectors of the lattice $u(\theta)\Z^{m+n}$ in the region $C_{(p+\theta q, q)}$, other than $\pm (p+\theta q, q)$. This is further equivalent to the condition that $1 \leq \|e^{-M} q\|= \|\pi_2(a_M(p+\theta q, q))\| < e$ and there are no primitive vectors of the lattice $a_M u(\theta) \Z^{m+n}$ in the region $a_M C_{(p+\theta q, q)} = C_{a_M(p+\theta q, q)}$, other than $\pm a_M(p+\theta q, q)$. This holds if and only if $a_M(p+\theta q, q) \in S_{a_Mu(\theta)\Gamma}$. This gives a one-to-one correspondence between $\text{Best}(\theta, M, M+1)$ and elements of $S_{a_Mu(\theta)\Gamma}$. Hence, the number of best approximations $(p,q)$ of $\theta$ with $e^M \leq \|q\|< e^{M+1}$ equals $f(a_M u(\theta) \Gamma)$ for all $M \in \Z_{\geq 0}$. The lemma now follows.
\end{proof}

\section{Properties of $f$}

In this section, we will show that $f$ is a bounded function on $\X$. Additionally, we will also study the continuity properties of $f$.

\subsection{Boundedness of $f$}
\begin{lem}
        \label{lem: bounded f}
        There exists $M \geq 1$ such that $f(\Lambda) \leq M$ for all $\Lambda \in \X$.
\end{lem}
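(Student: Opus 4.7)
My approach to proving the lemma rests on three ingredients: a monotonicity induced by the best-approximation condition, a Minkowski-type volume bound on each $C_v$, and a geometric-growth estimate on the $\pi_2$-norms of the counted vectors.

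First, I would observe that if $v, w \in \Lambda$ both satisfy $\#(\Lambda_\prim \cap C_v) = \{\pm v\}$ and $\#(\Lambda_\prim \cap C_w) = \{\pm w\}$ with $v \neq \pm w$, then one cannot simultaneously have $\|\pi_1(v)\| \leq \|\pi_1(w)\|$ and $\|\pi_2(v)\| \leq \|\pi_2(w)\|$, else $v \in C_w$ would contradict the defining condition for $w$. Identifying $v$ with $-v$, the counted pairs can thus be ordered as $v_1, \ldots, v_{k'}$ with $\|\pi_2(v_i)\|$ strictly increasing and $\|\pi_1(v_i)\|$ strictly decreasing, all inside the bounded box $\{u : \|\pi_1(u)\| \le 1,\, 1 \le \|\pi_2(u)\| < e\}$.

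Next, I would apply Minkowski's second theorem to the symmetric convex body $C_{v_i}$: the successive minima $\lambda_j^{(i)}$ with respect to $\Lambda$ satisfy $\lambda_1^{(i)} = 1$ (attained by $v_i$) and $\lambda_j^{(i)} > 1$ for $j \geq 2$, since any short lattice vector linearly independent of $v_i$ in $C_{v_i}$ would have a primitive representative also in $C_{v_i}$, contradicting the defining condition. This gives $\mathrm{vol}(C_{v_i}) < 2^{m+n}$, and using the normalisations $c_m > 2^m$, $c_n > 2^n$ on the unit-ball volumes, yields the key strict inequality
\[
\|\pi_1(v_i)\|^m \|\pi_2(v_i)\|^n \leq \kappa
\]
for some constant $\kappa = \kappa(m, n) < 1$.

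The heart of the proof is a geometric-growth claim: there exist $d = d(m, n)$ and $c = c(m,n) > 1$ such that $\|\pi_2(v_{i+d})\| \geq c\, \|\pi_2(v_i)\|$. Given this, since all $\|\pi_2(v_i)\| \in [1, e)$, the count $k'$ is at most $d \lceil \log e / \log c \rceil$. To establish the growth I would extract $m+n$ linearly independent vectors from $v_i, \ldots, v_{i+d-1}$; they generate a sublattice $\Lambda' \subseteq \Lambda$ whose covolume is a positive integer and hence at least $1$. A block-Hadamard estimate, splitting the matrix into its $m$ rows of $\pi_1$-entries (bounded by $\|\pi_1(v_i)\|$) and $n$ rows of $\pi_2$-entries (bounded by $\|\pi_2(v_{i+d-1})\|$), yields $1 \le |\det \Lambda'| \leq C_{m,n}\, \|\pi_1(v_i)\|^m \|\pi_2(v_{i+d-1})\|^n$. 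Combining with the Minkowski bound on $v_i$ forces $(\|\pi_2(v_{i+d-1})\|/\|\pi_2(v_i)\|)^n \geq 1/(C_{m,n} \kappa)$, which can be made strictly greater than $1$ by choosing $d$ large enough or by iterating the estimate over blocks.

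The main obstacle is this last step: securing a strict growth factor with the given normalisations, and handling the degenerate case where all counted vectors lie in a proper linear subspace $V \subsetneq \mathbb{R}^{m+n}$ (to be dealt with by descent, applying the same scheme to $\Lambda \cap V$ and inducting on dimension). Once the geometric growth is in place, one concludes $f(\Lambda) = 2k' \leq M$ for some universal $M = M(m, n, \|\cdot\|_m, \|\cdot\|_n) \geq 1$.
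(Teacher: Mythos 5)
Your first two steps (the antichain/monotonicity observation and the bound $\|\pi_1(v)\|^m\|\pi_2(v)\|^n\le\kappa<1$ via successive minima and the volume normalisation) are fine, but the step you yourself call the heart of the proof is not actually established, and it is precisely where the difficulty lies. The Laplace/Hadamard estimate only yields $\bigl(\|\pi_2(v_{i+d-1})\|/\|\pi_2(v_i)\|\bigr)^n\ \ge\ 1/(C_{m,n}\kappa)$, and there is no reason for $C_{m,n}\kappa$ to be smaller than $1$: $\kappa=2^{m+n}/(c_mc_n)$ is only barely below $1$ after the paper's normalisation, while $C_{m,n}$ (a sum of $\binom{m+n}{m}$ products of $m\times m$ and $n\times n$ minors, times norm-comparison factors) is in general much larger than $\kappa^{-1}$. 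Taking $d$ larger does not improve the inequality, because your bound uses only the extreme indices $i$ and $i+d-1$; and iterating over consecutive blocks a ratio bound that may be less than $1$ produces no growth at all (the ratios multiply, so a per-block lower bound $\rho<1$ only degrades). So the claimed geometric growth of $\|\pi_2(v_i)\|$ is exactly the unproved statement. The second obstacle you flag is also genuine and your proposed fix does not obviously work: the counted vectors may all lie in a proper rational subspace $V$, and after passing to $\Lambda\cap V$ the lower bound $|\det\Lambda'|\ge 1$ disappears — the covolume of $\Lambda\cap V$ (equivalently, of the lattice spanned by your selected vectors) can be arbitrarily small for $\Lambda\in\X$, and the unit-ball volume normalisation has no analogue inside $V$, so neither the determinant lower bound nor the Minkowski step transfers under the descent. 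Since the lemma must hold for \emph{every} lattice, these degenerate configurations cannot be dismissed.

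For comparison, the paper's proof bypasses volumes, determinants and independence entirely: by compactness (plus a dilation of the horizontal factor) one fixes $M$ so that any $M+1$ points of the box $\{u:\|\pi_1(u)\|\le r,\ \|\pi_2(u)\|\le e\}$ contain two points $x,y$ with $\|\pi_1(x-y)\|\le r/3$ and $\|\pi_2(x-y)\|<1$. Using your monotonicity observation, $M+1$ counted vectors would all lie in such a box with $r=\|\pi_1(v_1)\|$, and the difference of two close ones is a nonzero lattice vector lying in $C_{v_1}$ and distinct from $\pm v_1$ (its $\pi_2$-norm is $<1\le\|\pi_2(v_1)\|$), contradicting $\Lambda_\prim\cap C_{v_1}=\{\pm v_1\}$. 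That pigeonhole argument needs no favourable constants and no full-rank configuration, which is why it closes immediately where your determinant route stalls; to salvage your approach you would need a genuinely new idea to force a growth factor strictly greater than $1$ and to handle the proper-subspace case.
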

 \begin{rem}
    Note that the value of $M$ depends on $m,n$ as well as on the choice of norms on $\R^m$ and $\R^n$.
\end{rem}
\begin{proof}[Proof of Lemma~\ref{lem: bounded f}]
    The proof is motivated from the proof of \cite[Prop.~9.8]{SW22}. Let $M$ be large enough so that any subset of $B_1 \times B_e$ of cardinality $M+1$ contains distinct points $x, y$ satisfying
\[
\|\pi_1(x-y)\| \leq \frac{1}{3} \quad \text{and} \quad \|\pi_2(x-y)\| < 1.
\]
Such a number $M$ exists by compactness of $B_1 \times B_e$.  
Moreover, by applying a linear transformation that dilates the horizontal subspace $\mathbb{R}^m \times \{0\}$, one sees that for any $r > 0$, in any subset of $B_r \times B_e$ of cardinality $M+1$, there exist distinct points $x, y$ such that
\[
\|\pi_1(x-y)\| \leq \frac{r}{3} \quad \text{and} \quad \|\pi_2(x-y)\| < 1.
\]

We claim that $f(\Lambda) \leq M$ for all $\Lambda$.  
Suppose by contradiction that there exist vectors $v_1, \ldots, v_{M+1} \in (B_1 \times B_e) \cap \Lambda$ such that
\[
\Lambda_\prim \cap C_{v_j} = \{\pm v_j\} \quad \text{for all} \quad j=1, \ldots, M+1.
\]
By reordering the $v_j$, we may assume that
\[
\|\pi_2(v_1)\| \leq \|\pi_2(v_2)\| \leq \cdots \leq \|\pi_2(v_{M+1})\|.
\]
Then, by the condition that \(\Lambda_\prim \cap C_{v_j} = \{\pm v_j\}\), it follows that
\[
\|\pi_1(v_1)\| \geq \|\pi_1(v_2)\| \geq \cdots \geq \|\pi_1(v_{M+1})\|.
\]
Thus, the points $v_1, \ldots, v_{M+1}$ are distinct and lie in $B_{\|\pi_1(v_1)\|} \times B_e$.  
By the choice of $M$, there exist indices $i \neq j$ such that
\[
\|\pi_1(v_i - v_j)\| < \frac{\|\pi_1(v_1)\|}{3} \quad \text{and} \quad \|\pi_2(v_i - v_j)\| < 1.
\]
Thus, the nonzero vector $v_i - v_j$ lies in $\Lambda$, is distinct from $\pm v_1$, and belongs to $C_{v_1}$.  
This contradicts the assumption that
\[
\Lambda_\prim \cap C_{v_1} = \{\pm v_1\}.
\]
Hence, the lemma follows.
\end{proof}

\subsection{Continuity of $f$}

Since $f$ is a non-constant function taking values in $\Z$, it is clear that $f$ is a discontinuous function. However, it is still approximately continuous on average. This subsection aims to prove the same. To study the continuity properties, we introduce the following auxiliary functions.
\vspace{0.5cm}

For all sufficiently small $\e > 0$, let $\varphi_\e : \mathbb{R}^{m+n} \to \mathbb{R}$ be a continuous function such that
\begin{align}
    \label{eq: def varphi e}
    \varphi_\e(v) := 
    \begin{cases}
        1 & \text{if } v \in \{ (x, y) : \|x\| \leq 1+\e,\, 1-\e \leq \|y\| \leq e+\e \} \\ 
        &\quad \text{and } v \notin \{ (x, y) : \|x\| < 1-\e,\, 1+\e < \|y\| < e-\e \}, \\
        0 & \text{otherwise}.
    \end{cases}
\end{align}
Note that by the Siegel mean value theorem \cite{sie}, we have
\begin{align}
    \int_{\X} \sum_{v \in \Lambda_\prim} \varphi_\e(v)\, d\mu_\X(\Lambda) 
    &= \frac{1}{\zeta(m+n)} \int_{\mathbb{R}^{m+n}} \varphi_\e(v)\, dv \nonumber \\
  &\ll \Bigl( (1+\e)^m \bigl( (e+\e)^n - (1-\e)^n \bigr) \Bigr)- \Bigl( (1-\e)^m \bigl( (e-\e)^n- (1+\e)^n \bigr) \Bigr) \nonumber\\ 
    &\ll \e, \label{eq: int varphi}
\end{align}
where the implied constant can be chosen independently of $\e$, say $C_\varphi$, and depend only on $m$, $n$ and the choice of norms on $\R^m$ and $\R^n$.
\vspace{0.5cm}

Similarly, for small $\e > 0$, define $\Phi_\e : \mathbb{R}^{m+n} \times \mathbb{R}^{m+n} \to \mathbb{R}$ by
\begin{align}
    \label{eq: def Phi e}
    \Phi_\e(v, w) := 
    \begin{cases}
        1 & \text{if } v, w \in \{ (x, y) : \|x\| \leq 1+\e,\,  \|y\| \leq e+\e \}, \\
        &\quad \text{and either } |\|\pi_1(v)\| - \|\pi_1(w)\|| \leq \e \text{ or } |\|\pi_2(v)\| - \|\pi_2(w)\|| \leq \e, \\
        0 & \text{otherwise}.
    \end{cases}
\end{align}
Applying Siegel's mean value theorem \cite{sie} and Rogers' formula \cite[Thm.~5]{rog}, we obtain, for all sufficiently small $\e > 0$,
\begin{align}
    &\int_{\X} \sum_{\substack{v, w \in \Lambda_\prim \\ w \neq \pm v}} \Phi_\e(v, w)\, d\mu_\X(\Lambda) \nonumber\\&= \int_{\X} \sum_{v, w \in \Lambda_\prim } \Phi_\e(v, w)\, d\mu_\X(\Lambda) -\int_{\X} \sum_{v \in \Lambda_\prim } \left( \Phi_\e(v,v) + \Phi_\e(v,-v) \right) \, d\mu_\X(\Lambda) \nonumber \\
    &= \frac{1}{\zeta(m+n)^2} \int_{\mathbb{R}^{m+n} \times \mathbb{R}^{m+n}} \Phi_\e(v, w)\, dv\, dw 
    \ll \e, \label{eq: int PHI}
\end{align}
where again, the implied constant can be chosen independently of $\e$, say $C_\Phi$, and depend only on $m$, $n$ and the choice of norms on $\R^m$ and $\R^n$.

\begin{rem}
To justify the estimate $\int_{\R^{m+n}\times\R^{m+n}} \Phi_\e(v,w)\,dv\,dw \ll \e$ in \eqref{eq: int PHI},
fix $v$ in the region $\{ (x, y) : \|x\| \leq 1+\e,\,  \|y\| \leq e+\e \}$. Then $\Phi_\e(v,w)=1$ implies that $w$ lies in
$\{(x,y): \|x\|\le 1+\e,\ \|y\|\le e+\e\}$ and, for some $j\in\{1,2\}$,
\[
\bigl|\|\pi_j(w)\|-\|\pi_j(v)\|\bigr|\le \e.
\]
For each fixed $j$, this restricts $\pi_j(w)$ to an $\e$-thick shell in $\R^{m}$ (if $j=1$)
or in $\R^{n}$ (if $j=2$), whose Lebesgue measure is $O(\e)$, while the other coordinate
remains in a bounded set of $O(1)$-volume. Hence the set of such $w$ has volume $O(\e)$, and Fubini gives $\int \Phi_\e \ll \e$.
\end{rem}
\vspace{0.5cm}

 The following is the main result of this subsection and is the first key ingredient for the proof of the main Theorem.
    \begin{lem}
        \label{lem: perturbed difference}
        There exists $C >0$ such that for all small enough $\e>0$ and $g$ in an $\e$-ball around identity in $G$, we have
        \begin{align}
            \label{eq: lem: perturbed difference 1}
            |f(g\Lambda)- f(\Lambda)| \leq \sum_{v\in \Lambda_\prim } \varphi_{C\e}(v)+   \sum_{\substack{v,w \in \Lambda_\prim \\ w \neq \pm v}} \Phi_{C\e}(v,w),
        \end{align}
        where $\varphi_{C\e}$ and $\Phi_{C\e}$ are defined as in \eqref{eq: def varphi e} and \eqref{eq: def Phi e}.
    \end{lem}
    \begin{proof}
    Fix $C_1>0$ so that for all small enough $\e>0$ and $g$ in an $\e$-ball around the identity element in $G$, we have that 
        \begin{align}
            \label{eq: w 1}
            \|gv-v\| \leq C_1 \e \|v\| , \quad \text{ for all } v \in \R^{m+n}.
        \end{align}
        Let $C= 2 C_1 e$, fix $\e>0$ and $g$ in the $\e$-ball around the origin. For $\Lambda \in \X$, we define $S_\Lambda$ as set of all $v \in \Lambda$ such that
        \begin{align*}
            \Lambda_\prim \cap C_v= \{\pm v\}, \quad \quad 1 \leq \|\pi_2(v)\| < e, \quad \quad \|\pi_1(v)\| \leq 1.
        \end{align*}
        Clearly then we have 
        $$
        |f(g\Lambda)- f(\Lambda)| \leq  \#(g^{-1}S_{g\Lambda} \setminus S_\Lambda) + \#(S_\Lambda \setminus g^{-1}S_{g\Lambda}). 
        $$
        Fix $\Lambda \in \X$ and let 
        \begin{align*}
            g^{-1}S_{g\Lambda} \setminus S_\Lambda &= \{v_1, \ldots, v_p\} \subset \Lambda \\
            S_\Lambda \setminus g^{-1}S_{g\Lambda} &= \{w_1, \ldots, w_q\} \subset \Lambda.
        \end{align*}
        Divide the set $\{v_1, \ldots, v_p\}$ into two parts according to whether or not it belongs to the set $\{(x,y): \|x\| \leq 1, 1 \leq \|y\| < e\}$. Assume that $\{v_1, \ldots, v_r\} \subset \{(x,y): \|x\| \leq 1, 1 \leq \|y\| < e\}$ and $\{v_{r+1}, \ldots, v_p\} \cap\{(x,y): \|x\| \leq 1, 1 \leq \|y\| < e\} = \emptyset$. Also divide the set $\{w_1, \ldots, w_q\}$ into two parts depending on whether it belongs to the set $$\{(x,y): \|x\| < 1-C\e, 1+ C\e < \|y\| < e- C\e\}$$ or not. Assume that $\{w_1, \ldots, w_s\} \subset \{(x,y): \|x\| < 1-C\e, 1+ C\e < \|y\| < e -C\e\}$ and $\{w_{s+1}, \ldots, w_q\} \cap\{(x,y): \|x\| < 1-C\e, 1+ C\e < \|y\| < e -C\e\} = \emptyset$.\\

        We now make the following observations before proceeding. \vspace{0.5cm}

        \noindent {\bf Observation 1:} $\varphi_{C\e}( v_i) =1$ for $i=r+1, \ldots, p$.

        \noindent{\bf Explanation:} Note that since $gv_i \in S_{g\Lambda}$ for $i=r+1, \ldots, p$, we get that $g v_i \in \{(x,y): \|x\| \leq 1, 1 \leq \|y\| \leq e\}$. Using \eqref{eq: w 1}, we get that $v_i = g^{-1}(gv_i) \subset \{(x,y): \|x\| \leq 1+C_1 e \e, 1-C_1 e \e \leq \|y\| \leq e +C_1 e \e\}$. Since we already know that $v_i \notin \{(x,y): \|x\| \leq 1, 1 \leq \|y\| < e\} = \emptyset$, we get that $\varphi_{C\e}( v_i) =1$ for $i=r+1, \ldots, p$. \vspace{0.5cm}

        \noindent {\bf Observation 2:} For $1 \leq i \leq r$, there exists $v_i' \in \Lambda_{\prim} \setminus \{ \pm v_i \}$ such that $\Phi_{C\e}(v_i,v_i') =1$.

        \noindent{\bf Explanation:} Since $v_i \in \{(x,y): \|x\| \leq 1, 1 \leq \|y\| < e\}$ and $v_i \notin S_\Lambda$, we get $v_i' \in \Lambda_\prim$ with $v_i' \neq \pm v_i$ such that $v_i' \in C_{v_i} \subset \{(x,y): \|x\| \leq 1,  \|y\| < e\}$. Thus we have 
        \begin{align}
        \label{eq: w w 1}
            \|\pi_1(v_i')\| \leq \|\pi_1(v_i)\|, \quad \|\pi_2(v_i')\| \leq \|\pi_2(v_i)\|.
        \end{align}
       But since $gv_i' \notin C_{gv_i}$, there exits $j \in \{1,2\}$ such that 
       \begin{align}
       \label{eq: w w 2}
           \|\pi_j(gv_i')\| > \|\pi_j(gv_i)\|.
       \end{align}
       Fix such a $j$. Note that 
        \begin{align}
        \label{eq: w w 3}
           | \|\pi_j(g v_i')\| - \|\pi_j( v_i')\|| \leq C_1 e\e, \quad | \|\pi_j(g v_i)\| - \|\pi_j( v_i)\|| \leq C_1 e \e.
        \end{align}
        Thus we have 
        \begin{align*}
            |\|\pi_j(v_i)\|- \|\pi_j(v_i')\|| &= \|\pi_j(v_i)\|- \|\pi_j(v_i')\|  \quad \text{ using \eqref{eq: w w 1}}\\
            &\leq \|\pi_j(g v_i)\|- \|\pi_j(g v_i')\| + 2C_1 e \e  \quad \text{ using \eqref{eq: w w 3}} \\
            &\leq 2C_1 e\e = C\e \quad \text{ using \eqref{eq: w w 2}.}
        \end{align*}
        Hence the following holds: $v_i,v_i' \in \{(x,y): \|x\| \leq 1,  \|y\| < e\}$ and for at least one $j\in \{1,2\}$, we have $|\|\pi_j(v_i)\|- \|\pi_j(v_i')\|| \leq C \e $. Hence $\Phi_{C\e}(v_i,v_i') =1$. This proves the observation. \vspace{0.5cm}

    \noindent {\bf Observation 3:} $\varphi_{C\e}( w_i) =1$ for $i=s+1, \ldots, q$.

        \noindent{\bf Explanation:} Note that for each $i=s+1, \ldots, q$, since $w_i \in S_{\Lambda}$ we get that $ w_i \in \{(x,y): \|x\| \leq 1, 1 \leq \|y\| \leq e\}$. Also by given assumption, we have  $w_i \notin \{(x,y): \|x\| < 1-C\e, 1+ C\e < \|y\| < e -C\e\}$. Thus we have $\varphi_{C\e}( w_i) =1$ for $i=s+1, \ldots, q$. \vspace{0.5cm}

        \noindent{\bf Observation 4:} For $1 \leq i \leq s$, there exits $w_i' \in \Lambda_{\prim} \setminus \{ \pm w_i \}$ such that $\Phi_{C\e}(w_i,w_i') =1$.

        \noindent{\bf Explanation:} Note that 
        \begin{align*}
            \|gw_i - w_i\| \leq C_1 \e \|w_i\| \leq C_1 \e e.
        \end{align*}
        Thus $gw_i \in \{(x,y): \|x\| \leq 1, 1 \leq \|y\| < e\}$. But $gw_i \notin S_{g\Lambda}$. So there exists $w_i' \in \Lambda$ such that $gw_i' \in C_{gw_i}$. This implies that 
        \begin{align}
        \label{eq: w w 4}
            \|\pi_1(gw_i')\| \leq \|\pi_1(gw_i)\|, \quad \|\pi_2(gw_i')\| \leq \|\pi_2(gw_i)\|.
        \end{align}
       Also $w_i' \notin C_{w_i}$, so there exist $j \in \{1,2\}$ such that
        \begin{align}
            \label{eq: w w 6}
            \|\pi_j(w_i')\| > \|\pi_j(w_i)\|
        \end{align}
       Fix such a $j$. Note that 
        \begin{align}
        \label{eq: w w 5}
           | \|\pi_j(g w_i')\| - \|\pi_j( w_i')\|| \leq C_1 e\e, \quad | \|\pi_j(g w_i)\| - \|\pi_j( w_i)\|| \leq C_1 e \e.
        \end{align}
       Therefore we have
        \begin{align*}
            \left| \|\pi_j(w_i')\|- \|\pi_j(w_i)\| \right| &= \|\pi_j(w_i')\|- \|\pi_j(w_i)\| \quad \text{ using \eqref{eq: w w 6}} \\
            &\leq \|\pi_j(gw_i')\|- \|\pi_j(gw_i)\| + 2C_1 e \e \quad \text{ using \eqref{eq: w w 5}}\\
            &\leq 2C_1 e \e \quad \text{ using \eqref{eq: w w 4}.}
        \end{align*}

    Finally note that 
    \begin{align*}
            w_i' \in g^{-1} C_{gw_i}  \subset g^{-1}\{(x,y): \|x\| \leq 1, \|y\| < e\} \subset  \{(x,y): \|x\| \leq 1+ C_1\e e,  \|y\| < e+ C_1 e \e\}.
        \end{align*}

        Therefore we have $w_i, w_i' \in \{(x,y): \|x\| \leq 1+ C\e,  \|y\| < e+C\e\}$ such that for at least one $j \in \{1,2\}$, we have $|\|\pi_j(w_i)\|-\|\pi_j(w_i')\| |\leq C\e$. Hence $\Phi_{C\e}(w_i, w_i')=1$. This proves the observation. \vspace{0.5cm}

        Using Observations 1, 2, 3, 4 and the fact that $v_1, \ldots, v_p, w_1, \ldots, w_q$ are distinct, we get that 
        \begin{align*}
            &|f(g\Lambda)- f(\Lambda)| \leq p+q = (p-r)+ r + (q-s) + s \\
            &\leq \sum_{i=r+1}^p \varphi_{C\e}( v_i)+ \sum_{i=1}^r \Phi_{C\e}(v_i, v_i') + \sum_{i=s+1}^q \varphi_{C\e}( w_i)+  \sum_{i=1}^q \Phi_{C\e}(w_i, w_i')  \\ 
            & \leq \sum_{v\in \Lambda_\prim } \varphi_{C\e}(v)+   \sum_{\substack{v,w \in \Lambda_\prim \\ w \neq \pm v}} \Phi_{C\e}(v,w).
        \end{align*}
        Thus the lemma follows.
    \end{proof}

\section{Proof of Theorem \ref{thm: main effective levy general}}

The second main ingredient in the proof of Theorem \ref{thm: main effective levy general} is the following result.

\begin{thm}
\label{thm: main abstract theorem}
    Let $f$ be a bounded function on $\X$. Assume that there exists a family of non-negative measurable functions $\{\tau_\e\}_\e$ on $\X$ satisfying the following.
    \begin{itemize}
        \item There exists a constant $C$ such that 
        \begin{align}
            \label{eq: con: xi integral}
            \int_\X \tau_\e \, d\mu_\X \leq C \e.
        \end{align}
        \item For all small enough $\e>0$, and $g$ in $\e$-neighbourhood of identity in $G$, we have
    \begin{align}
        \label{eq:con: f cont}
        |f(g\Lambda)-f(\Lambda)| \leq \tau_\e(\Lambda) \quad \text{ for all } \Lambda \in \X.
    \end{align}
    \end{itemize}
    Then for any measure $\mu$ on $\Mat$ satisfying condition (EMEI), the following holds.
    \begin{enumerate}
        \item[(i)] For any $\varepsilon > 0$ and for $\mu$-almost every $\theta \in \Mat$, we have
        \begin{align}
            \label{eq:conclusion: effective}
            \sum_{s=0}^{N-1} f(a_{s}u(\theta)\Gamma) = N \gamma  + O_{ \e, \theta}(N^{1/2} \log^{\tfrac{1}{2}+\e} N),
        \end{align}
        where
        \begin{align}
            \label{eq: def gamma intro}
             \gamma= \mu_{\X}(f).
        \end{align}
        \item[(ii)] For $N \geq 1$, let us define $F_N: \X \rightarrow \R$ as
    \begin{equation}
        \label{eq: def F N}
        F_N(\Lambda) =\frac{1}{\sqrt{N}} \sum_{i=0}^{N-1} \left( f\circ a_i(\Lambda) - \gamma \right).
    \end{equation}
    Then for every $x \in \R$, we have
    \begin{align}
         \label{eq:conclusion: clt}
         \mu(\{\theta: F_N(u(\theta)\Gamma) < x \}) \rightarrow \mathrm{Norm}_{\sigma}(x) 
    \end{align}
    as $N \rightarrow \infty$, where
    \begin{align}
    \label{eq: def sigma}
         \sigma^2 =\sum_{s \in \Z} \left(  \int_{\X} f(a_s\Lambda) f(\Lambda)\, d\mu_\X(\Lambda) - \mu_{\X}(f)^2 \right)< \infty.
    \end{align}
    \end{enumerate}
\end{thm}

\begin{proof}[Proof of Theorem \ref{thm: main effective levy} assuming Theorem~\ref{thm: main abstract theorem}]
 Fix a measure $\mu$ on $\Mat$ satisfying condition (EMEI).  Using Lemma \ref{lem: bounded f}, \ref{lem: f is measurable} and \ref{lem: perturbed difference}, and the equations \eqref{eq: int varphi}, \eqref{eq: int PHI}, it is clear that $f$ satisfies the conditions of Theorem \ref{thm: main abstract theorem}. Therefore with $\gamma, \sigma, F_N$ as in statement of Theorem \ref{thm: main abstract theorem}, the equations \eqref{eq:conclusion: effective} and \eqref{eq:conclusion: clt} hold for $f$.

 Now using Lemmas \ref{lem: correpondence with best approx} and \ref{lem: bounded f}, we have for every $\e>0$ and for $\mu$-almost every $\theta$, the following holds.
 \begin{align*}
     \cN(\theta, T) &= \sum_{i=0}^{\lfloor T \rfloor -1} f(a_i u(\theta)\Gamma) + o(1) \\
     &= \gamma T + o( T^{1/2} \log^{\tfrac{3}{2} + \e } T) \quad \text{ using equation \eqref{eq:conclusion: effective}.}
 \end{align*}
 This proves part (i) of the theorem. For part (ii), let 
 $$
    \Delta_T(\theta) = \frac{\cN(\theta, T)- \gamma T}{T^{1/2}}.
    $$ 
    Then
    \begin{align}
    \label{eq: proof CLT 1}
    \Delta_T(\theta) = \alpha_T F_{\lfloor T \rfloor}(u(\theta)\Gamma) + \beta_T,
    \end{align}
    where 
    \begin{align}
    \label{eq: proof CLT 2}
        \alpha_T = \sqrt{\frac{\lfloor T \rfloor}{T}}  \longrightarrow 1 \quad \text{as }  T \rightarrow \infty,
    \end{align}
    and 
    \begin{align*}
      \beta_T &= \Delta_T(\theta) -\alpha_T F_{\lfloor T \rfloor} \\
      &= \frac{1}{T^{1/2}} \left( \left(\cN(\theta, T) - \sum_{s=0}^{\lfloor T \rfloor-1} f\circ a_s(u(\theta)\Gamma) \right)  -(T- \lfloor T \rfloor )\gamma   \right) \\
      &\ll \frac{1}{T^{1/2}} \longrightarrow 0.
    \end{align*}
    where the last inequality follows from Lemmas \ref{lem: correpondence with best approx} and \ref{lem: bounded f}. 
    Hence $\Delta_T(.)$ and $F_{\lfloor T \rfloor}(u(\theta)\Gamma)$ have the same distribution as $T \rightarrow \infty$. Part (ii) of the theorem now follows from equation \eqref{eq:conclusion: clt}.
\end{proof}

\section{Notation}
The rest of the paper is the paper is devoted to proving Theorem \ref{thm: main abstract theorem}. Henceforth, we fix a measure $\mu$ on $\Mat$ satisfying condition (EMEI). 

Denote by $U$ the subgroup
\begin{equation}
    U:= \left\{ u(\theta) : \theta \in M_{m \times n}(\R) \right\} < G.
\end{equation}
Let $\Y := U\Gamma = \{ u. \Gamma: u \in U\} \subset \X$. Geometrically, $\Y$ can be viewed as an $mn$-dimensional torus embedded in the space of lattices $\X$. We denote by $\mu_{\Y}$ the pusforward of $\mu$ under the map $\theta \mapsto u(\theta)\Gamma$.  We will need the following the result.\\

The first one follows directly from the fact that $\mu$ satisfies condition (EMEI).
\begin{cor}
 \label{mix hom}
    There exists $k \geq 1$ such that for $r \in \N$, there exists $ \delta_r' > 0$ satisfying the following: for all $\Psi_0 \in C_c^{\infty}(\Y )$, $\Psi_1, \ldots, \Psi_r \in C_c^{\infty}(\X)$ and $t_1, \ldots,t_r >0 $, we have 
    \begin{align}
    \label{eq: mix hom}
         \int_{\Y} \Psi_0(y) \left( \prod_{i=1}^r \Psi_i (a_{t_i} y)\right) \, d\mu_{\Y} = \left(\int_{\Y} \Psi_0 \, d\mu_{\Y}  \right) \prod_{i=1}^r \left( \int_{\X} \Psi_i \,d\mu_{\X} \right)  + \mathcal{O}_{ r} \left(e^{-\delta_r' D(t_1, \ldots, t_r)} \|\Psi_0\|_{C^k}  \prod_{i=1}^r \|\Psi_i\|_{C_k} \right),
    \end{align}
    where $D(t_1, \ldots, t_r) = \min\{t_i, |t_i - t_j|: 1\leq i ,j \leq r, i \neq j\}$
\end{cor}

The second is well known mixing property of $G$ action on $\X$.
\begin{thm}[{\cite[Cor.~3.5]{KM3}}]
    \label{thm: mixing}
    There exists $\delta_0>0$ and $k \geq 0$ such that for all  $\Psi_1, \Psi_2 \in C_c^k(\X)$ and $t \geq 0$, we have
   \begin{align}
   \label{eq: mixing}
       \int_\X \Psi_1(x) \Psi_2(a_tx) \, d\mu_\X(x) = \left(\int_{\X} \Psi_1 \, d\mu_\X(x) \right) \left(\int_{\X} \Psi_2 \, d\mu_\X(x) \right) + O(e^{-\delta_0 t} \|\Psi_1\|_{C^k} \|\Psi_2\|_{C^k}).
   \end{align}
\end{thm} 

For the remainder of the paper, we fix a natural number \(k\) and a constant \( \delta > 0 \) defined by
\begin{align}
\label{eq: def delta}
\delta = \min \{ \delta_1, \delta_2, \delta_1', \delta_2', \delta_0 \},
\end{align}
where \( \delta_r \) is as in \eqref{eq: mix hom identity},
\( \delta_r' \) is as in \eqref{eq: mix hom},
and \( \delta_0 \) is as in \eqref{eq: mixing}. \\

We also fix a function $f$ on $\X$ and a family of functions $(\tau_\e)_\e$, a constant $C$ satisfying equations \eqref{eq: con: xi integral} and  \eqref{eq:con: f cont}. Note that $f$ is not necessarily given by \eqref{eq: def f}, but any function satisfying conditions of Theorem \ref{thm: main abstract theorem}.  \\

 Also assume $M>0$ is such that $|f| \leq M$. We define
    \begin{align}
    \label{eq: def gamma}
    \gamma := \int_\X f\, d\mu_\X.
    \end{align}
    Further fix $\xi = \xi(m,n,k)>0$ be a constant such that 
 \begin{equation}
 \label{eq: def xi}
     \|h \circ a_t\| \ll e^{\xi t} \|h\|_{C^k}, \text{    for all $t \geq 0$ and } h \in C_c^{k}(\X), 
 \end{equation}
 where the suppressed constants are independent of $t$ and $h$. \\

\subsection{Convolution kernels}

To construct a smooth approximation of the function $f$, we will require mollifiers supported on small neighborhoods of the identity in $G$. The following well-known lemma provides such a family of smooth bump functions with uniform control on their $C^k$ -norms. 

\begin{lem}
\label{lem: convolution existence}
There exists a constant $C_\eta > 0$ and, for all sufficiently small $\e > 0$, a family of non-negative smooth functions $(\eta_\e)_{\e > 0}$ on $G$, supported on the $\e$-ball $B_\e^G$ around the identity in $G$, such that
\begin{align*}
    \int_G \eta_\e(g)\, dm_G(g) &= 1, \\
    \|\eta_\e\|_{C^j} &\leq C_\eta \e^{-(j + (m+n)^2 - 1)}.
\end{align*}
\end{lem}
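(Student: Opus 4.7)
The plan is to obtain $\eta_\e$ by transporting a standard Euclidean mollifier from the Lie algebra $\mathfrak{g} = \mathfrak{sl}_{m+n}(\R)$ onto a small neighborhood of the identity in $G$ via the exponential map. Set $d := \dim G = (m+n)^2 - 1$, so that the desired exponent $j + (m+n)^2 - 1$ reads simply as $d + j$.

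First I would fix, once and for all, a nonnegative $\psi \in C_c^\infty(\R^d)$ supported in the Euclidean unit ball with $\int_{\R^d} \psi \, dx = 1$, and define the rescaled bumps $\psi_\e(x) := \e^{-d}\psi(x/\e)$. A direct application of the chain rule to $\psi(x/\e)$ then produces $\|\psi_\e\|_{C^j(\R^d)} \leq C(\psi,j)\, \e^{-(d+j)}$ while preserving $\int \psi_\e = 1$ and $\supp(\psi_\e) \subset B_\e(0)$. Next, using a fixed basis of $\mathfrak{g}$ --- the same one that is used to define the $C^k$-norms on $G$ --- to identify $\mathfrak{g}$ with $\R^d$, the exponential map $\exp : \mathfrak{g} \to G$ is a diffeomorphism from some open neighborhood $V$ of $0$ onto a neighborhood of the identity, and in these coordinates the Haar measure pulls back to $J(X)\, dX$ for a smooth density $J$ with $J(0) > 0$. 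For all sufficiently small $\e$ one has $B_\e(0) \subset V$, and I would define
\[
\eta_\e(g) := \begin{cases} \psi_\e(\exp^{-1}\! g)\, /\, J(\exp^{-1}\! g), & g \in \exp(B_\e(0)), \\ 0, & \text{otherwise}. \end{cases}
\]
Changing variables yields $\int_G \eta_\e \, dm_G = \int_{B_\e(0)} \psi_\e(X)\, dX = 1$, so no further normalization is needed, and $\supp(\eta_\e) \subset B_\e^G$ (possibly after shrinking $\e$ by a harmless absolute constant, since $\exp$ is bi-Lipschitz near $0$).

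To control $\|\eta_\e\|_{C^j(G)}$, I would invoke the standard fact that the $C^j$-norm on $G$ is computed through left-invariant differential operators built from the fixed basis of $\mathfrak{g}$; pulled back via $\exp$ on the neighborhood $V$, these become differential operators on $\mathfrak{g}$ whose coefficients are smooth functions of $X$, uniformly bounded in $C^j$ on $B_\e(0)$ independently of $\e$. Since $1/J$ is smooth and bounded (uniformly for small $\e$) on $B_\e(0)$, Leibniz combined with the Euclidean estimate above gives $\|\eta_\e\|_{C^j(G)} \leq C_\eta\, \e^{-(d+j)} = C_\eta\, \e^{-(j + (m+n)^2 - 1)}$, with $C_\eta$ depending only on $j$, $\psi$, and the chosen basis.

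There is no real obstacle here: this is the textbook construction of a mollifying family on a Lie group. The only points that require a moment of care are the bookkeeping ones --- identifying $d$ with $(m+n)^2 - 1 = \dim \SL_{m+n}(\R)$, and making sure the $C^j$-norms on $G$ are computed with respect to the same basis of $\mathfrak{g}$ that is used to pull the Euclidean bumps back onto $G$ --- after which the exponent in the statement drops out automatically.
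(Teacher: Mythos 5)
Your construction is correct: pulling back a rescaled Euclidean mollifier $\psi_\e(x)=\e^{-d}\psi(x/\e)$ through the exponential map, dividing by the smooth Haar density $J$, and noting that left-invariant derivatives pulled back to $\mathfrak{g}$ have uniformly bounded coefficients near $0$ gives exactly $\int_G \eta_\e\,dm_G=1$, $\supp\eta_\e\subset B_\e^G$ (after a harmless fixed rescaling, absorbed into $C_\eta$), and $\|\eta_\e\|_{C^j}\ll \e^{-(j+(m+n)^2-1)}$ since $d=\dim\SL_{m+n}(\R)=(m+n)^2-1$. The paper states this lemma as well known and gives no proof, and your argument is precisely the standard construction it implicitly invokes, so there is nothing to add.
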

\begin{proof}
Since $G$ is a real Lie group, there exists a neighbourhood $W$ of $0$ in the Lie algebra $\mathfrak g \simeq \R^{(m+n)^2-1}$ such that the exponential map
\[
\exp : W \to \exp(W) \subset G
\]
is a diffeomorphism onto a neighbourhood of the identity. Using this, it is enough to show the existence of mollifier functions on $W$, which is further homeomorphic to $B_1(0)$, the unit ball in $\R^{(m+n)^2-1}$. To prove the existence of mollifier functions on $B_1(0)$, we fix a non-negative smooth function $\eta$ on $\R^{(m+n)^2-1}$ with
\[
\supp (\eta) \subset B_1(0),
\qquad
\int_{\R^d} \eta(x)\,dx = 1,
\]
and define
\[
\eta_\e(x) := \e^{-d}\eta(x/\e).
\]
It is then clear that the function $\eta_\e$ satisfies the requisite properties, and hence the lemma follows.
\end{proof}

\noindent Throughout the paper, we fix $C_\eta$ and the family $(\eta_\e)$ as in Lemma \ref{lem: convolution existence}, and let 
\begin{align}
    \label{eq: def l}
    l := k + (m+n)^2 - 1.
\end{align}

\subsection{Cut-off function}
To construct a compactly supported smooth approximation of the function $\eta_\e *f$, we will require smooth cut-off functions on $\X$. The following lemma provides such a family of functions with uniform control on their $C^k$ -norms.

\begin{lem}[{\cite[Lem. 4.2, 4.11]{BG}}]
\label{lem: def phi}
There exists a constant $C_\phi > 0$ and, for all sufficiently small $\e > 0$, a family $(\phi_\e)$ in $C_c^\infty(\X)$ such that the following holds:
\begin{align*}
0\le \phi_\e \le 1,\quad \|\phi_\e\|_{C^k}\ll 1, \quad \int_{\X} \phi_\e \, d\mu_{\X} \geq 1- C_\phi \e.
\end{align*}
\end{lem}

\subsection{The function $f_\e$}
Throughout the paper, we define for all small enough $\e>0$
\begin{align}
\label{eq: def f epsilon}
    f_\e&= \phi_\e \cdot  \eta_\e *f.
\end{align}
Note that using Lemma \ref{lem: bounded f}, we have
\begin{align}
\label{eq: norm of tilde f}
    \|f_\e\|_{C^k} \ll \|\phi_\e\|_{C^k}  \| \eta_\e *f\|_{C^k} \ll \|\eta_\e\|_{C^k} \|f\|_{C^0} \ll \e^{-l}.
\end{align}

\section{Approximation on average}

\begin{lem}
    \label{lem: f is measurable}
    The function $f: \X \rightarrow \R$ is a measurable map.
\end{lem}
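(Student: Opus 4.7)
The plan is to identify $\X = G/\Gamma$ via $g\Gamma \leftrightarrow g\Z^{m+n}$ and prove that the lift $\tilde f : G \to \Z_{\geq 0}$, $\tilde f(g) := f(g\Z^{m+n})$, is Borel measurable. Since $\tilde f$ is manifestly $\Gamma$-invariant under right translation, this implies that $f$ is Borel on $\X$ with respect to the quotient Borel structure.

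Let $E := \{(x,y) \in \R^m \times \R^n : \|x\| \leq 1,\ 1 \leq \|y\| < e\}$ denote the shell where eligible vectors live. Because $v, -v \in C_v$ always, the requirement $\Lambda_\prim \cap C_v = \{\pm v\}$ forces $v$ itself to be primitive. Indexing the contributing vectors in $\Lambda = g\Z^{m+n}$ by $z \in \Z^{m+n}_\prim$, I would rewrite
$$
\tilde f(g) \;=\; \sum_{z \in \Z^{m+n}_\prim} h_z(g), \qquad h_z(g) \;:=\; \mathbf{1}_E(gz) \cdot \prod_{w \in \Z^{m+n}_\prim \setminus \{\pm z\}} \mathbf{1}_{\R^{m+n} \setminus C_{gz}}(gw).
$$

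The crux is to verify that each $h_z$ is Borel on $G$. The first factor is the indicator of the (locally closed, hence Borel) set $\{g \in G : gz \in E\}$. For each fixed $w \neq \pm z$, the condition $gw \notin C_{gz}$ unfolds to the \emph{open} condition
$$
\|\pi_1(gw)\| > \|\pi_1(gz)\| \quad \text{or} \quad \|\pi_2(gw)\| > \|\pi_2(gz)\|,
$$
which is open by continuity of the norms and of the linear action of $G$ on $\R^{m+n}$. The product over the countable family of $w$'s is therefore the indicator of a $G_\delta$ set, hence Borel, and so $h_z$ is Borel.

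Finally, by Lemma \ref{lem: bounded f}, for each $g$ at most $M$ of the indicators $h_z(g)$ are nonzero, so $\tilde f$ is a (pointwise finite) countable sum of Borel functions and is therefore Borel. I do not expect any serious obstacle: no boundary or coincidence subtleties arise because every inequality appearing in the definition of $h_z$ is strict, and the only real content is recognising that the non-inclusion conditions $gw \notin C_{gz}$ are jointly open in $g$.
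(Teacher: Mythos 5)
Your argument is correct, and it takes a genuinely different route from the paper. You lift $f$ to $G$ and exhibit the lift as a countable sum, over $z\in\Z^{m+n}_\prim$, of indicators of Borel subsets of $G$: the shell condition gives a locally closed (hence Borel) set, and for each fixed $w\neq\pm z$ the exclusion $gw\notin C_{gz}$ is an open condition in $g$, so the product over $w$ is the indicator of a $G_\delta$; summing and descending along $G\to G/\Gamma$ gives the claim. This yields honest Borel measurability of $f$ itself, everywhere, and uses nothing beyond elementary point-set topology. The paper instead replaces $f$ by a variant $\overline{f}$ defined with the open boxes $C_v^\circ$ and closed shell conditions, checks $f=\overline{f}$ off a $\mu_\X$-null set, and proves that $\overline{f}$ has closed superlevel sets via a limiting argument using Mahler's compactness criterion; strictly speaking that establishes measurability of $f$ only up to a null set (i.e.\ with respect to the completed measure), but it has the side benefit of producing a semicontinuous representative, a type of regularity that is convenient later when suprema over uncountable families of perturbations appear (cf.\ the function $\psi_\e$ in the proof of Lemma \ref{lem: approximation f error}). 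Two small points in your write-up: the passage from ``the $\Gamma$-invariant lift is Borel on $G$'' to ``$f$ is Borel on $\X$'' deserves a word — it holds because $\Gamma$ is discrete, so $G\to G/\Gamma$ is a covering map admitting countably many continuous local sections (equivalently, a Borel fundamental domain), and this identifies the quotient Borel structure with the Borel $\sigma$-algebra of $\X$; and the appeal to Lemma \ref{lem: bounded f} for the sum is unnecessary, since a pointwise limit of the partial sums of nonnegative Borel functions is Borel in any case (finiteness also follows directly from discreteness of $g\Z^{m+n}$ and boundedness of the shell).
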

\begin{proof}
Let \( \epsilon_k = 2^{-k} \). Then
\[
\sum_{k=1}^{\infty} \int_\X \tau_{\epsilon_k} \, d\mu_\X \leq C \sum_{k=1}^\infty 2^{-k} < \infty.
\]
By Chebyshev's inequality and the Borel-Cantelli lemma, it follows that for \( \mu_\X \)-almost every \( \Lambda \in \X \),
\[
\tau_{\epsilon_k}(\Lambda) \to 0 \quad \text{as } n \to \infty.
\]

Now fix such a \( \Lambda \). For any sequence \( g_l \to e \) in \( G \),
\[
|f(g_l \Lambda) - f(\Lambda)| \leq \tau_{\delta_l}(\Lambda)
\]
where \( \delta_l = d(g_l, e) \). For large enough \( n \), \( \delta_l < \epsilon_{k_l} \) for increasing sequence some \( k_l \), so
\[
|f(g_n \Lambda) - f(\Lambda)| \to 0.
\]
Thus \( f(g \Lambda) \to f(\Lambda) \) as \( g \to e \). This shows that \( f \) is continuous at \( \mu_\X \)-almost every point along the \( G \)-orbit topology.

Since \( \X \) is standard Borel and \( G \) is second countable, continuity along orbits at almost every point implies that \( f \) coincides \( \mu_\X \)-a.e. with a Borel measurable function.

Therefore, \( f \) is \( \mu_\X \)-measurable.
\end{proof}

\begin{rem}
    Using Lemma \ref{lem: f is measurable}, we get that the convolution $\eta_\e *f$ makes sense, and hence the function $f_\e$ is well defined.
\end{rem}

\begin{lem}
    \label{lem: approximation f error}
    For all small enough $\e>0$ and all $t>0$, we have
    \begin{align}
        \label{eq: lem: approximation f error}
        \int_{\Mat} |f_\e - f|(a_t u(\theta) \Gamma) \, d\mu(\theta) \ll \e + e^{-\delta t} \e^{-l},
    \end{align}
    where $f_\e$ is defined as in \eqref{eq: def f epsilon}, $\delta$ is as in \eqref{eq: def delta}, and $l$ is as in \eqref{eq: def l}.
\end{lem}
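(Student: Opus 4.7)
The strategy is to decompose $|f_\e - f|$ pointwise using $f_\e = \phi_\e \cdot (\eta_\e * f)$ and to estimate the two resulting contributions separately via the equidistribution results of Theorems \ref{thm: mixing} and \ref{mix hom}. By the triangle inequality,
\[
|f_\e - f|(\Lambda) \;\leq\; (1-\phi_\e)(\Lambda)\,|\eta_\e * f|(\Lambda) \;+\; |\eta_\e * f - f|(\Lambda),
\]
and since $\|f\|_\infty \leq M$ by Lemma \ref{lem: bounded f}, the first summand is at most $M(1-\phi_\e)$.

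For the cut-off contribution $M\int_{\Mat}(1-\phi_\e)(a_t u(\theta)\Gamma)\,d\mu(\theta) = M\int_{\Y}(1-\phi_\e)(a_t y)\,d\mu_{\Y}(y)$, I would invoke Theorem \ref{mix hom} with $r=1$, $\phi_0 \equiv 1$ on $\Y$, and $\phi_1 = 1-\phi_\e$. Lemma \ref{lem: def phi} provides both $\int_{\X}(1-\phi_\e)\,d\mu_{\X} \leq C_\phi\e$ and $\|1-\phi_\e\|_{C^k} \ll 1$, so this contribution is $\ll \e + e^{-\delta t}$.

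For the mollification contribution, the identity $(\eta_\e * f - f)(\Lambda) = \int_G \eta_\e(g)\bigl(f(g\Lambda) - f(\Lambda)\bigr)\,dm_G(g)$, combined with Lemma \ref{lem: perturbed difference} (applicable since $\eta_\e$ is supported in the $\e$-ball about the identity), yields the pointwise bound
\[
|\eta_\e * f - f|(\Lambda) \;\leq\; F_{C\e}(\Lambda) \;:=\; \sum_{v \in \Lambda_\prim} \varphi_{C\e}(v) \;+\; \sum_{\substack{v,w\in\Lambda_\prim\\ w\neq\pm v}} \Phi_{C\e}(v,w).
\]
It therefore suffices to control $\int_{\Y} F_{C\e}(a_t y)\,d\mu_{\Y}(y)$. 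To do so I would smoothen $\varphi_{C\e}$ and $\Phi_{C\e}$ from above by non-negative $C^\infty$ bumps whose integrals over $\R^{m+n}$, respectively $\R^{m+n}\times\R^{m+n}$, remain $\ll \e$ (which they do, by the same geometric bookkeeping underlying \eqref{eq: int varphi} and \eqref{eq: int PHI}) and whose $C^k$-norms do not exceed $\e^{-l}$ by the dimensional accounting in Lemma \ref{lem: convolution existence}. Their Siegel (respectively Rogers) transforms are then smooth functions on $\X$ (resp.\ $\X\times\X$) of $C^k$-norm $\ll\e^{-l}$; combining Theorems \ref{thm: mixing} and \ref{mix hom} with the Siegel and Rogers identities for the main term yields $\int_{\Y} F_{C\e}(a_t y)\,d\mu_{\Y}(y) \ll \e + e^{-\delta t}\e^{-l}$. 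Adding this to the cut-off estimate gives the claim.

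The main technical obstacle I anticipate is the handling of the double-sum term $\sum \Phi_{C\e}(v,w)$: it is a \emph{pairwise} lattice statistic rather than a single Siegel transform, so to apply the quantitative mixing machinery one must either factor it into a tensor product of two single-variable Siegel sums (so that Theorem \ref{mix hom} applies with $r=2$ at coincident times $t_1=t_2=t$) or, alternatively, perform a direct Rogers-type Fourier computation along the $U$-orbit. In either route one must keep careful track of the $C^k$-norms of the smoothened transforms so as not to exceed the $\e^{-l}$ budget set by the statement, and absorb the lower-order "diagonal" contributions $\Phi_{C\e}(v,\pm v)$ in the process.
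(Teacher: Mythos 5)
Your decomposition $|f_\e - f| \le M(1-\phi_\e) + |\eta_\e * f - f|$ and your treatment of the cut-off term agree with the paper, and the pointwise bound $|\eta_\e*f-f|(\Lambda) \le \sum_{v}\varphi_{C\e}(v) + \sum_{v,w}\Phi_{C\e}(v,w)$ via Lemma \ref{lem: perturbed difference} is also a step the paper uses. The genuine gap is in how you then control $\int_{\Y} F_{C\e}(a_t y)\,d\mu_\Y(y)$: you propose to smoothen $\varphi_{C\e}$ and $\Phi_{C\e}$ and feed their Siegel/Rogers transforms into Theorem \ref{mix hom}, asserting these transforms have $C^k$-norm $\ll \e^{-l}$. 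That assertion fails. The Siegel transform of a (smooth, nonnegative) bump is unbounded on $\X$ — it blows up in the cusp like the reciprocal of the shortest vector — so it lies in neither $C_c^\infty(\X)$ nor $C_b^k(\X)$ and has infinite $C^k$-norm; Theorem \ref{mix hom} (which requires $\phi_i \in C_c^\infty(\X)$) simply does not apply to it, and the same objection applies a fortiori to the pairwise statistic built from $\Phi_{C\e}$. Moreover, Siegel's and Rogers' formulas only evaluate integrals against $\mu_\X$; they say nothing about the integral along the translated torus $a_t\Y$, which is exactly the quantity you need. Your closing paragraph acknowledges the double-sum difficulty but does not resolve it, so the central estimate of the lemma is not actually established.

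The paper's proof avoids this obstruction by a different ordering of the steps: it first builds a \emph{bounded, compactly supported, smooth} majorant of $|\eta_\e*f-f|\cdot\phi_\e$, namely $\eta_\e*\psi_\e$ where $\psi_\e(\Lambda)=\sup_{g\in B_\e^G}\bigl((|\eta_\e*f-f|\cdot\phi_\e)(g\Lambda)\bigr)$. Since $f$ is bounded by $M$ (Lemma \ref{lem: bounded f}) and $\phi_\e$ is compactly supported, $\eta_\e*\psi_\e$ is bounded by $M$, lies in $C_c^\infty(\X)$, and has $\|\eta_\e*\psi_\e\|_{C^k}\le C_\eta M\e^{-l}$, so Theorem \ref{mix hom} applies to it directly, producing the error $O(e^{-\delta t}\e^{-l})$. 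Only \emph{after} this transfer to an integral over $(\X,\mu_\X)$ does the paper invoke Lemma \ref{lem: perturbed difference} together with \eqref{eq: int varphi} and \eqref{eq: int PHI}; there the unboundedness of the Siegel-type sums is harmless because the Siegel and Rogers identities hold for arbitrary nonnegative measurable integrands against $\mu_\X$. If you want to salvage your route, you would need either a version of effective equidistribution of $a_t\Y$ valid for unbounded Siegel transforms (with a separate nondivergence/cusp-excursion argument), which is substantially more work than the lemma requires, or else interpose a bounded majorant as the paper does.
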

\begin{proof}
    Let us define the function $\psi_\e: \X \rightarrow \R$ as 
    $$
    \psi_\e(\Lambda)= \sup_{g \in B_\e^G} \left(( |\eta_\e * f - f| \cdot \phi_\e) (g\Lambda)  \right).
    $$
    Then $\psi_\e$ is a positive measurable function, upper bounded by $2M$. Now consider the function $\eta_\e * \psi_\e$. First of all, this is a compactly supported smooth function, upper bounded by $M$ and $\|\eta_\e * \psi_\e\|_{C^k} \leq C_\eta M \e^{-l}$. Secondly, by the definition of $\psi_\e$, we have $\eta_\e* \psi_\e (\Lambda) \geq |\eta_\e * f - f|(\Lambda) \cdot \phi_\e(\Lambda)$. Therefore, we have
    \begin{align}
         &\int_{\Mat}  (|\eta_\e * f - f| \cdot \phi_\e) (a_t u(\theta)  \Gamma)  \, d\mu(\theta) \nonumber \\
         &\leq \int_{\Mat} \eta_\e* \psi_\e(a_t u(\theta) \Gamma) \, d\mu(\theta) \nonumber \\
         &= \int_\X \eta_\e* \psi_\e(\Lambda) \, d\mu_{\X}(\Lambda) + O(e^{-\delta t} \|\eta_\e* \psi_\e\|_{C^k}) \quad \text{ using \eqref{eq: mix hom identity}} \nonumber\\
         &= \int_\X \int_{G} \eta_{\e}(g_1) \psi_\e(g_1^{-1}\Lambda) \, d\mu_{\X}(\Lambda) dm_G(g) + O(e^{-\delta t} C_\eta M \e^{-l}). \label{eq:  a 1}
    \end{align}
    Note that for any $g_1$ in $\e$-ball around identity element of $G$, we have
    \begin{align}
        \psi_\e(g_1^{-1}\Lambda) &\leq \sup_{g_2 \in B_\e^G} |\eta_\e * f - f|(g_2 g_1^{-1}\Lambda)\nonumber \\
        &= \sup_{g_2 \in B_\e^G} \left| \int_{B_{\e}^G} \eta_\e(g_3) f(g_3^{-1} g_2 g_1^{-1}\Lambda) \, dm_G(g_3)- f(g_2 g_1^{-1}\Lambda) \right| \nonumber\\
        &\leq \sup_{g_2 \in B_\e^G}  \left( \int_{B_{\e}^G} \eta_\e(g_3) \left(  \left|   f(g_3^{-1} g_2 g_1^{-1}\Lambda)  - f(\Lambda) \right| + \left| f(g_2 g_1^{-1}\Lambda) -f(\Lambda) \right| \right) \, dm_G(g_3) \right) \nonumber\\
        &\leq 2\sup_{g \in B_{3\e}^G} |f(g\Lambda)-f(\Lambda)| \cdot \left(  \int_{B_{\e}^G} \eta_\e(g_3) \, dm_G(g_3) \right) =2\sup_{g \in B_{3\e}^G} |f(g\Lambda)-f(\Lambda)| \nonumber \\
        &\leq 2 \tau_{3\e}(\Lambda) \label{eq: a 2}
    \end{align}
     Combining \eqref{eq: a 2} with \eqref{eq:  a 1}, we get that
    \begin{align}
       \int_{\Mat} \left( |\eta_\e * f - f| \cdot \phi_\e \right)(a_t u(\theta) \Gamma) \, d\mu(\theta) \nonumber &\leq 2 \int_\X \tau_{3\e}(\Lambda)  \, d\mu_{\X}(\Lambda) + O(e^{-\delta t} C_\eta M \e^{-l}) \nonumber \\
       &\leq 6 C \e + O(e^{-\delta t} C_\eta M \e^{-l}) \nonumber \\
       &\ll \e + e^{-\delta t} \e^{-l}. \label{eq: a 3}
    \end{align}
    Also note that
    \begin{align}
        \int_{\Mat} (|f|\cdot (1- \phi_\e))(a_t u(\theta) \Gamma) \, d\mu(\theta) &\leq M \left( 1- \int_{\Mat} \phi_\e(a_t u(\theta) \Gamma)\, d\mu(\theta)  \right) \nonumber \\
        &\leq M \left( 1- \int_{\X} \phi_\e \, d\mu_\X + O(e^{-\delta t} \| \phi_\e\|_{C^k}  )  \right) \nonumber \\
        &\ll \e + e^{-\delta t}. \label{eq: a 4}
    \end{align}
    Finally, note that
    \begin{align}
        &\int_{\Mat} |f_\e - f|(a_t u(\theta) \Gamma) \, d\mu(\theta) \nonumber \\
        &\leq \int_{\Mat}  (|\eta_\e * f - f| \cdot \phi_\e) (a_t u(\theta)  \Gamma)  \, d\mu(\theta) + \int_{\Mat} (|f|\cdot (1- \phi_\e))(a_t u(\theta) \Gamma) \, d\mu(\theta). \label{eq: a 5}
    \end{align}
    
    The lemma now follows from \eqref{eq: a 3}, \eqref{eq: a 4} amd \eqref{eq: a 5}.
\end{proof}

\begin{lem}
\label{lem: integral fe X}
    For all $\e>0$, we have 
    $$
    \int_{\X} f_\e \, d\mu_\X = \gamma +  O( \e ),
    $$
    where $\gamma$ is defined as in \eqref{eq: def gamma}.
\end{lem}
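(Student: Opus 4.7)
The plan is to split the integral $\int_\X f_\e \, d\mu_\X = \int_\X \phi_\e \cdot (\eta_\e * f) \, d\mu_\X$ by writing $\phi_\e = 1 - (1-\phi_\e)$, which gives
\begin{align*}
\int_\X f_\e \, d\mu_\X = \int_\X \eta_\e * f \, d\mu_\X \;-\; \int_\X (1-\phi_\e)(\eta_\e * f) \, d\mu_\X.
\end{align*}
I would then show that the first term on the right equals $\gamma$ exactly, and that the second term is $O(\e)$.

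For the first term, I would use $G$-invariance of $\mu_\X$ together with Fubini. Writing $(\eta_\e * f)(\Lambda) = \int_G \eta_\e(g) f(g^{-1}\Lambda)\, dm_G(g)$ (which is legal since $f$ is bounded and measurable by Lemmas \ref{lem: bounded f} and \ref{lem: f is measurable}), and exchanging the order of integration, the inner integral $\int_\X f(g^{-1}\Lambda)\,d\mu_\X(\Lambda)$ equals $\gamma$ for every $g$ by $G$-invariance. Since $\int_G \eta_\e\, dm_G = 1$ by Lemma \ref{lem: convolution existence}, this forces $\int_\X \eta_\e * f\, d\mu_\X = \gamma$.

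For the second term, Lemma \ref{lem: bounded f} gives $0\le f\le M$, and since $\eta_\e\ge 0$ with $\int\eta_\e=1$ we also get $0 \le \eta_\e * f \le M$. Combined with $0 \leq 1 - \phi_\e \leq 1$ and Lemma \ref{lem: def phi}, which provides $\int_\X \phi_\e \, d\mu_\X \geq 1 - C_\phi \e$, we conclude
\begin{align*}
\left| \int_\X (1-\phi_\e)(\eta_\e * f)\, d\mu_\X \right| \;\le\; M \int_\X (1 - \phi_\e)\, d\mu_\X \;\le\; M C_\phi \e \;=\; O(\e).
\end{align*}
Putting the two pieces together yields the claim.

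There is no serious obstacle: the content is that convolution by $\eta_\e$ preserves the integral (by Haar invariance of $\mu_\X$) and that the cutoff $\phi_\e$ differs from $1$ in an $L^1$-set of measure $O(\e)$, so multiplying by $\phi_\e$ costs only $O(\e)$ because $f$ is uniformly bounded.
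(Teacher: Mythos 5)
Your proposal is correct and is essentially the paper's own argument: the same decomposition $\int_\X f_\e = \int_\X \eta_\e * f + \int_\X (\eta_\e*f)(\phi_\e - 1)$, with the first term equal to $\gamma$ by $G$-invariance of $\mu_\X$ (which you spell out via Fubini) and the second term bounded by $M\,C_\phi\,\e$ using Lemma \ref{lem: bounded f} and Lemma \ref{lem: def phi}. No gaps.
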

\begin{proof}
    Note that
    \begin{align*}
        \int_{\X} f_\e \, d\mu_\X &= \int_\X \eta_\e*f \, d\mu_\X + \int_\X (\phi_\e-1) \cdot \eta_\e* f \cdot \, d\mu_\X \\
        &= \int_\X f\, d\mu_\X + O\left( 1- \int_\X \phi_\e \, d\mu_\X  \right) \quad \text{ using Lemma \ref{lem: bounded f} and invariance of $\mu_\X$} \\
        &= \gamma + O( \e ) \quad \text{ using Lemma \ref{lem: def phi} .}
    \end{align*}
    
\end{proof}

\begin{lem}
\label{lem: integral fe Y}
    For all $\e>0$ and $t \geq 0$, we have 
    $$
    \int_{\Mat} f_\e(a_tu(\theta)\Gamma) \, d\mu(\theta) = \gamma +  O( \e + e^{-\delta t} \e^{-l} ),
    $$
    where $\gamma$ is defined as in \eqref{eq: def gamma}, $\delta$ is as in \eqref{eq: def delta}, and $l$ is as in \eqref{eq: def l}.
\end{lem}
\begin{proof}
    Note that 
    \begin{align*}
        \int_{\Mat} f_\e(a_tu(\theta)\Gamma) \, d\mu(\theta)  &= \int_\X f_\e \, d\mu_\X + O\left( e^{-\delta t} \|f_\e\|_{C^k} \right) \text{ using \eqref{eq: mix hom identity}}\\
        &= \gamma +  O( \e ) + O(e^{-\delta t} \e^{-l}) \quad \text{ using Lemma \ref{lem: integral fe X} and \eqref{eq: norm of tilde f}.}
    \end{align*}
    
\end{proof}

\begin{lem}
    \label{lem: birkhoff average}
    For $t \geq 0$, the following holds
    $$ \int_{\Mat} f (a_tu(\theta)\Gamma) \, d\mu(\theta) = \gamma  + O(e^{-\frac{\delta t}{l+1}}),$$
    where $\gamma$ is defined as in \eqref{eq: def gamma}, $\delta$ is as in \eqref{eq: def delta}, and $l$ is as in \eqref{eq: def l}.
\end{lem}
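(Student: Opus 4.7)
The plan is to compare $f$ against its smooth approximation $f_\e$ and use the two estimates already established: Lemma \ref{lem: approximation f error}, which controls the approximation error $\|f - f_\e\|_{L^1}$ along the pushforward by $a_t u(\cdot)$, and Lemma \ref{lem: integral fe Y}, which gives the equidistribution statement for the smoothed version. Both estimates produce the same shape of error, and balancing the two free quantities ($\e$ and $t$) will yield the stated decay.

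Concretely, I would write
\begin{align*}
\int_{\Mat} f(a_t u(\theta)\Gamma)\, d\mu(\theta) - \gamma
&= \int_{\Mat} (f - f_\e)(a_t u(\theta)\Gamma)\, d\mu(\theta) \\
&\quad + \left( \int_{\Mat} f_\e(a_t u(\theta)\Gamma)\, d\mu(\theta) - \gamma \right).
\end{align*}
By Lemma \ref{lem: approximation f error}, the first term is bounded in absolute value by $O(\e + e^{-\delta t}\e^{-l})$; by Lemma \ref{lem: integral fe Y}, the second term is also $O(\e + e^{-\delta t}\e^{-l})$. Combining, for every sufficiently small $\e > 0$,
\begin{equation*}
\left| \int_{\Mat} f(a_t u(\theta)\Gamma)\, d\mu(\theta) - \gamma \right| \ll \e + e^{-\delta t}\e^{-l}.
\end{equation*}

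It then remains to optimize. The natural choice is to equate the two terms, i.e.\ to set $\e^{l+1} = e^{-\delta t}$, which gives $\e = e^{-\delta t/(l+1)}$. Substituting this value produces the bound $O\bigl(e^{-\delta t/(l+1)}\bigr)$, as required. For small $t$ (where $\e$ would need to be outside the range of validity of the earlier lemmas) the statement is trivial because $f$ is uniformly bounded by Lemma \ref{lem: bounded f}, so the implicit constant absorbs this range.

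There is really no deep obstacle here: the heart of the work was in Lemmas \ref{lem: approximation f error} and \ref{lem: integral fe Y}, which together encode the tradeoff between approximation quality and mixing decay. The only point requiring a little care is to confirm that the optimizing $\e = e^{-\delta t/(l+1)}$ stays in the range where Lemma \ref{lem: approximation f error} and the definition of $f_\e$ (via Lemmas \ref{lem: convolution existence} and \ref{lem: def phi}) are applicable; this is automatic for all $t$ larger than some absolute constant, and the small-$t$ range is handled by boundedness of $f$.
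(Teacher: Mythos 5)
Your proposal is correct and follows exactly the paper's own argument: split $f$ into $f_\e$ plus the error $f - f_\e$, invoke Lemma \ref{lem: approximation f error} and Lemma \ref{lem: integral fe Y} to get the bound $O(\e + e^{-\delta t}\e^{-l})$, and then choose $\e = e^{-\delta t/(l+1)}$ to balance the terms. The remark about handling small $t$ via boundedness of $f$ is a harmless extra precaution not needed in the paper's write-up.
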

\begin{proof}
    Let $\e= e^{-\frac{\delta t}{l+1}}$. Note that
    \begin{align*}
        \int_{\Mat} f (a_tu(\theta)\Gamma) \, d\mu(\theta)   &= \int_{\Mat} f_\e (a_tu(\theta)\Gamma) \, d\mu(\theta)  + O \left(\int_{\Mat} \left| f_\e - f\right| (a^t u(\theta)\Gamma) \, d\mu(\theta) \right) \\
        &= \gamma +  O( \e + e^{-\delta t} \e^{-l} )  \quad \text{ using Lemma \ref{lem: approximation f error} and \ref{lem: integral fe Y}} \\
        &= \gamma  + O(e^{-\frac{\delta t}{l+1}}).
    \end{align*}
    
\end{proof}

\begin{lem}
    \label{lem: var 3}
    For $s \geq 0$ and $\e>0$, we have
    \begin{align*}
     \int_{\X}f_\e(a_s\Lambda)f_\e(\Lambda) \, d\mu_{\X}(\Lambda) - \mu_{\X}(f_\e)^2 = \int_{\X} f(a_s\Lambda) f(\Lambda)\, d\mu_\X(\Lambda)- \mu_{\X}(f)^2 + O(\e).
    \end{align*}
\end{lem}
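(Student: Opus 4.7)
The plan is to reduce the entire statement to a single key bound: $\int_{\X} |f_\e - f|\, d\mu_\X \ll \e$. Once this $L^1$-closeness on $\X$ is in hand, both the two-point correlation comparison and the mean-square comparison follow by routine triangle-inequality arguments using the uniform bound $|f|, |f_\e| \leq M$ (the latter holding because $f_\e = \phi_\e \cdot (\eta_\e * f)$, where $\eta_\e$ is a probability density, $|\phi_\e| \leq 1$, and $|f|\leq M$ by Lemma \ref{lem: bounded f}).

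To establish the key bound, I decompose
\[
f_\e - f \;=\; \phi_\e\,(\eta_\e * f - f) \;-\; (1-\phi_\e)\,f.
\]
The second term is immediately $O(\e)$ in $L^1(\mu_\X)$ by Lemma \ref{lem: def phi}, since its $\mu_\X$-integral is at most $M\int_\X (1-\phi_\e)\, d\mu_\X \leq M C_\phi \e$. For the first term, I use the standard bound
\[
|\eta_\e * f - f|(\Lambda) \;\leq\; \int_G \eta_\e(g)\,|f(g^{-1}\Lambda)-f(\Lambda)|\,dm_G(g),
\]
swap the order of integration via Fubini, and apply Lemma \ref{lem: perturbed difference} pointwise to dominate $|f(g^{-1}\Lambda)-f(\Lambda)|$ for $g$ in the $\e$-ball by the Siegel/Rogers-type sums $\sum_{v\in\Lambda_\prim}\varphi_{C\e}(v) + \sum_{v,w\in\Lambda_\prim,\ w\neq\pm v}\Phi_{C\e}(v,w)$. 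Integrating these against $\mu_\X$ and invoking identities \eqref{eq: int varphi} and \eqref{eq: int PHI} produces the $O(\e)$ estimate, which proves $\int_\X |f_\e - f|\, d\mu_\X \ll \e$.

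With this in hand, I write
\[
f_\e(a_s\Lambda)f_\e(\Lambda) - f(a_s\Lambda)f(\Lambda) \;=\; (f_\e - f)(a_s\Lambda)\,f_\e(\Lambda) + f(a_s\Lambda)\,(f_\e - f)(\Lambda),
\]
integrate over $\X$, and bound each piece by $M\cdot \|f_\e - f\|_{L^1(\mu_\X)} \ll \e$, using the $a_s$-invariance of $\mu_\X$ to transfer the $L^1$ norm of the composition. The mean-square term is handled by the factorization
\[
\mu_\X(f_\e)^2 - \mu_\X(f)^2 = \bigl(\mu_\X(f_\e) - \mu_\X(f)\bigr)\bigl(\mu_\X(f_\e) + \mu_\X(f)\bigr),
\]
whose first factor is at most $\|f_\e - f\|_{L^1(\mu_\X)} \ll \e$ and whose second factor is bounded by $2M$. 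Summing the two contributions yields the claimed $O(\e)$ error.

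I do not anticipate a genuine obstacle here: the only mildly delicate point is the application of Lemma \ref{lem: perturbed difference}, which is a pointwise inequality valid for every $g$ in a small neighborhood of the identity, so Fubini applies with no measurability trouble since $f$ is bounded and $\eta_\e$ has compact support. In contrast to the analogous error bounds on the horocycle $\Y$ (Lemma \ref{lem: approximation f error}), there is no need here to introduce any mixing quantity $e^{-\delta t}\e^{-l}$, because we are integrating directly against the invariant measure $\mu_\X$, so the argument is strictly simpler.
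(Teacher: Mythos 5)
Your proposal is correct and follows essentially the same route as the paper: reduce everything to $\int_{\X}|f_\e-f|\,d\mu_\X\ll\e$ via boundedness and $a_s$-invariance of $\mu_\X$, then split off the cutoff error (Lemma \ref{lem: def phi}) and control the convolution error by Lemma \ref{lem: perturbed difference} together with \eqref{eq: int varphi} and \eqref{eq: int PHI}. The only differences are cosmetic (which factor the cutoff term is attached to, and Fubini in place of the pointwise supremum bound).
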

\begin{proof}
    Note that 
    \begin{align*}
        &\left| \int_{\X}f_\e(a_s\Lambda)f_\e(\Lambda) \, d\mu_{\X}(\Lambda)- \mu_{\X}(f_\e)^2  -\int_{\X} f(a_s\Lambda) f(\Lambda)\, d\mu_\X(\Lambda) + \mu_{\X}(f)^2 \right| \\
        &\leq 2(\|f_\e\|_{C^0} + \|f\|_{C^0})\int_{\X} \left| f_\e -f\right| \, d\mu_\X \ll \int_{\X} \left| f_\e -f\right| \, d\mu_\X  \\
        &\ll \|\eta_\e *f\|_{C^0}\int_{\X} \left| 1- \phi_\e \right|  \, d\mu_\X +  \int_{\X}   \left| \eta_\e *f -f\right| \, d\mu_\X  \\
        &\ll \e +  \int_\X  \sup_{g \in B_\e^G}|f(g\Lambda) -f(\Lambda)| \, d\mu_\X(\Lambda) \\
        &\leq \e + \int_\X \tau_\e \, d\mu_\X(\Lambda) \quad \text{ using Lemma \ref{lem: perturbed difference}}\\
        &\ll \e + \e .
    \end{align*}
    This proves the lemma.
\end{proof}

\begin{lem}
    \label{lem: var 4}
    For $s \geq 0$, we have
    \begin{align}
    \int_{\X} f(a_s\Lambda) f(\Lambda)\, d\mu_\X(\Lambda) - \mu_{\X}(f)^2 = O \left(e^{\frac{-\delta s}{2l+1}} \right),
    \end{align}
    where $\delta$ is defined as in \eqref{eq: def delta}, and $l$ is as in \eqref{eq: def l}.
\end{lem}
\begin{proof}
    Let $\e= e^{\frac{-\delta s}{2l+1}}$. Note that
    \begin{align*}
        &\int_{\X} f(a_s\Lambda) f(\Lambda)\, d\mu_\X(\Lambda) - \mu_{\X}(f)^2 \\
        &= \int_{\X}f_\e(a_s\Lambda)f_\e(\Lambda) \, d\mu_{\X}(\Lambda)- \mu_{\X}(f_\e)^2 + O(\e) \quad \text{ using Lemma \ref{lem: var 3}}  \\
        &= O(e^{-\delta s}\|f_\e\|_{C^k}^2) + O(\e) \quad \text{ using Theorem \ref{thm: mixing}}\\
        &=O(e^{-\delta s} \e^{-2l} + \e) = O \left(e^{\frac{-\delta s}{2l+1}} \right).
    \end{align*}
    
\end{proof}

\section{Effective convergence of Birkhoff sums}

To derive estimates on Birkhoff sums, i.e., the first part of Theorem \ref{thm: main abstract theorem}, we will need the following theorem from Kleinbock-Shi-Weiss \cite{KSW} (see also \cite[Thm.~A.1]{aggarwalghosh2024joint}).
\begin{thm}[{\cite[Thm.~3.1]{KSW}}]
    \label{thm: KSW}
    Let $(Y,\nu)$ be a probability space and $F: Y \times \R_{\geq 0} \rightarrow \R$ be a bounded measurable function. Suppose that there exists $\delta_Y>0$ and $C_Y>0$ such that for any $s \geq t \geq 0$, ,
    \begin{align*}
        \left|\int_Y F(y,t) F(y,s) \, d\nu(y) \right| \leq C_Y e^{-\delta_Y \min\{t, s-t\}}.
    \end{align*}
    Then given $\e>0$ we have 
    \begin{align*}
        \frac{1}{T}\int_0^T F(y,t) \, dt = o(T^{-1/2} \log^{\frac{3}{2}+ \e} T),
    \end{align*}
    for $\nu$-almost every $y \in Y$.
\end{thm}

The above theorem will be used in the form of the following corollary to derive the first part of Theorem \ref{thm: main abstract theorem}.
\begin{cor}
    \label{cor: KSW}
    Let $(Y,\nu)$ be a probability space and $F: Y \times \R_{\geq 0} \rightarrow \R$ be a bounded measurable function. Suppose that there exists $\delta_Y>0$ and $C_Y>0$ such that for any $s \geq t \geq 0$, 
    \begin{align}
        \label{eq: KSW 1'}
        \left|\int_Y F(y,t) F(y,s) \, d\nu(y) \right| \leq C_Y e^{-\delta_Y \min\{t, s-t\}}.
    \end{align}
    Then given $\e>0$ we have 
    \begin{align}
        \label{eq: KSW 2'}
        \frac{1}{N} \sum_{i=1}^{N-1} F(y,i) = o(N^{-1/2} \log^{\frac{3}{2}+ \e}N ),
    \end{align}
    for $\nu$-almost every $y \in Y$.
\end{cor}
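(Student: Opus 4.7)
The plan is to reduce the discrete statement to Theorem \ref{thm: KSW} by replacing $F$ with its step-function extension in the time variable. Define $\tilde F : Y \times \R_{\geq 0} \to \R$ by $\tilde F(y,t) := F(y, \lfloor t \rfloor)$. Then $\tilde F$ is bounded and measurable, and for each integer $N \geq 1$,
$$\int_0^N \tilde F(y,t)\, dt \;=\; \sum_{i=0}^{N-1} F(y,i),$$
so the continuous-time average of $\tilde F$ at $T = N$ agrees with the Birkhoff sum in \eqref{eq: KSW 2'} up to the single bounded term $F(y,0)$.

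Next I would verify that $\tilde F$ inherits the decorrelation hypothesis of Theorem \ref{thm: KSW}. For $s \geq t \geq 0$ one has $\lfloor t \rfloor \geq t - 1$ and $\lfloor s \rfloor - \lfloor t \rfloor \geq s - t - 1$ (the latter because $\lfloor s\rfloor \geq s - 1$ and $\lfloor t\rfloor \leq t$), so
$$\min\{\lfloor t \rfloor,\; \lfloor s \rfloor - \lfloor t \rfloor\} \;\geq\; \min\{t,\, s-t\} - 1.$$
Applying hypothesis \eqref{eq: KSW 1'} to $F$ at the integer points $\lfloor t \rfloor, \lfloor s \rfloor$ therefore yields
$$\left| \int_Y \tilde F(y,t)\, \tilde F(y,s)\, d\nu(y) \right| \;=\; \left| \int_Y F(y, \lfloor t\rfloor)\, F(y, \lfloor s\rfloor)\, d\nu(y) \right| \;\leq\; C_Y e^{\delta_Y}\, e^{-\delta_Y \min\{t,\, s-t\}},$$
which is precisely the hypothesis \eqref{eq: KSW 1'} for $\tilde F$ with constants $C_Y e^{\delta_Y}$ and $\delta_Y$.

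Applying Theorem \ref{thm: KSW} to $\tilde F$ and restricting the resulting almost-sure convergence (which holds as $T \to \infty$ along the reals) to the subsequence $T = N \in \N$, we conclude
$$\frac{1}{N} \sum_{i=0}^{N-1} F(y,i) \;=\; \frac{1}{N}\int_0^N \tilde F(y,t)\, dt \;=\; o\!\left(N^{-1/2} \log^{3/2+\e} N\right)$$
for $\nu$-almost every $y$. Discarding the $i = 0$ term, which is $O(1/N)$ and hence negligible relative to $N^{-1/2}\log^{3/2+\e}N$, yields \eqref{eq: KSW 2'}. The only point requiring care is the off-by-one in the floor comparison above, which is absorbed into the constant; there is no genuine obstacle beyond this bookkeeping.
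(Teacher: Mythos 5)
Your proposal is correct and follows essentially the same route as the paper: define the step extension $\tilde F(y,t)=F(y,\lfloor t\rfloor)$, check that the correlation bound transfers with the constant adjusted by the off-by-one in the floor comparison, apply Theorem \ref{thm: KSW}, and evaluate at $T=N$. Your handling of the boundary term ($i=0$ versus the sum starting at $i=1$) is a harmless bookkeeping point that the paper glosses over, so nothing further is needed.
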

\begin{proof}
    Let us define $\tilde{F}: Y \times \R_{\geq 0} \rightarrow \R$ as $\tilde{F}(y,t)= F(y, \lfloor t \rfloor)$, where $\lfloor t \rfloor$ equals the greatest integer less than or equal to $t$. Then for any $s \geq t \geq 0$, 
    \begin{align*}
        \left|\int_Y \tilde{F}(y,t) \tilde{F}(y,s) \, d\nu(y) \right|  &= \left|\int_Y F(y,\lfloor t \rfloor) F(y,\lfloor s \rfloor) \, d\nu(y) \right| \\
        &\leq C_Y e^{-\delta_Y \min\{\lfloor t \rfloor, \lfloor s \rfloor-\lfloor t \rfloor\}} \\
        &\leq C_Y e^2 e^{-\delta_Y \min\{t, s-t\}}.
    \end{align*}
    Therefore using Theorem \ref{thm: KSW}, we get that for any $\e>0$
    \begin{align}
        \label{eq: b 1}
        \frac{1}{T}\int_T \tilde{F}(y,t) \, dt = o(T^{1/2} \log^{\frac{3}{2}+ \e} T),
    \end{align}
    for $\nu$-almost every $y \in Y$. Putting $T= N$ and using the definition of $\tilde{F}$ in \eqref{eq: b 1}, we get \eqref{eq: KSW 2'}. Thus the corollary follows.
\end{proof}

The following lemma proves the estimate in \eqref{eq: KSW 1'} for the function $f$, enabling the use of Corollary \ref{cor: KSW}.

\begin{lem}
    \label{lem: l2 estimates}
    There exists constants $C_f>0$ such that for any $s \geq t \geq 0$, 
    \begin{align}
        \label{eq: lem l2 estimate}
        \int_{\Mat} (f(a_t u(\theta) \Gamma)-\gamma) (f(a_s u(\theta) \Gamma) -\gamma) \, d\mu(\theta) \leq C_f e^{-\frac{\delta}{2l+1} \min\{t, s-t\}},
    \end{align}
    where $\gamma $ is defined as in \eqref{eq: def gamma}, $\delta$ is as in \eqref{eq: def delta}, and $l$ is as in \eqref{eq: def l}. 
\end{lem}
\begin{proof}
Set 
\[
\varepsilon \;=\; \exp\!\Bigl(-\tfrac{\delta}{2l+1}\,\min\{t,\;s-t\}\Bigr).
\]
Then decompose
\[
\int_{\Mat}\bigl(f(a_tu(\theta)\Gamma)-\gamma\bigr)\bigl(f(a_su(\theta)\Gamma)-\gamma\bigr)\,d\mu(\theta)
\;=\; I_1 \;+\; I_2 \;+\; I_3,
\]
where
\begin{align}
I_1 &= \int_{\Mat}\bigl(f_\e(a_tu(\theta)\Gamma)-\gamma\bigr)
\bigl(f_\e(a_su(\theta)\Gamma)-\gamma\bigr)\,d\mu(\theta),\\
I_2 &= \int_{\Mat}\bigl(f(a_tu(\theta)\Gamma)-f_\e(a_tu(\theta)\Gamma)\bigr)
\bigl(f_\e(a_su(\theta)\Gamma)-\gamma\bigr)\,d\mu(\theta),\\
I_3 &= \int_{\Mat}\bigl(f(a_tu(\theta)\Gamma)-\gamma\bigr)
\bigl(f(a_su(\theta)\Gamma)-f_\e(a_su(\theta)\Gamma)\bigr)\,d\mu(\theta).
\end{align}
Since $|f|\le M$ by Lemma \ref{lem: bounded f}, we get
\[
|I_2|\;\le\;2M\int_{\Mat}\bigl|f-f_\e\bigr|(a_tu(\theta)\Gamma)\,d\mu(\theta),
\quad
|I_3|\;\le\;2M\int_{\Mat}\bigl|f-f_\e \bigr|(a_su(\theta)\Gamma)\,d\mu(\theta).
\]
By Lemma \ref{lem: approximation f error}, each of these is less than $O\bigl(\varepsilon + e^{-\delta t}\,\varepsilon^{-l}\bigr) $, which is further less than $ O(e^{-\frac{\delta}{2l+1} \min\{t, s-t\}})$. Next, we have
\begin{align*}
&I_1 =  \int_{\Mat} f_\e(a_tu(\theta)\Gamma) f_\e(a_su(\theta)\Gamma)\,d\mu(\theta) +\gamma^2- \gamma \left(  \int_{\Mat} f_\e(a_tu(\theta)\Gamma) \,d\mu(\theta) + \int_{\Mat} f_\e(a_su(\theta)\Gamma)\,d\mu(\theta) \right)  \nonumber \\
&= \mu_\X(f_\e)^2 + O(e^{-\delta \min\{s-t,t\}} \|f_\e\|_{C^k}^2) +\gamma^2 - \gamma \left( 2\gamma + O(\e + e^{-\delta t} \e^{-l}) \right) \quad  \text{using \eqref{eq: mix hom identity}, Lemma \ref{lem: integral fe Y}} \\
&= O(\e + e^{-\delta \min\{s-t,t\}} \e^{-2l})  \quad \text{ using Lemma \ref{lem: integral fe X} and \eqref{eq: norm of tilde f}}\\
&= O(e^{-\frac{\delta}{2l+1} \min\{t, s-t\}}).
\end{align*}

Combining the bounds for \(I_1,I_2,I_3\), the lemma follows.
\end{proof}

\begin{proof}[Proof of Theorem \ref{thm: main abstract theorem}(i)]
    The first part of Theorem \ref{thm: main abstract theorem} follows directly from Corollary \ref{cor: KSW} and Lemma \ref{lem: l2 estimates}.
\end{proof}

\section{Central limit theorem}
This section aims to prove the second part of Theorem \ref{thm: main abstract theorem}.

\subsection{Approximation by compactly supported continuous functions}
\label{subsec: approximation}
To prove the second part of Theorem \ref{thm: main abstract theorem}, we define
\begin{align}
    \label{eq: tF N}
    \tF_N = \frac{1}{\sqrt{N}} \sum_{i=K}^{N-1} \left( f_\e \circ a_i(\Lambda) - \mu_\Y(f_\e\circ a_i) \right),
\end{align}
for $K $ equals the least integer greater than or equal to $N^{1/2} (\log N)^{-1}$, and $\e= N^{-100}$. Note that 
\begin{align}
    \|F_N - \tF_N\|_{L^1(\Y)} &\leq \frac{1}{\sqrt{N}} \sum_{i < K} \left\|f\circ a_i - \gamma \right\|_{L^1(\Y)} + \frac{1}{\sqrt{N}} \sum_{i \geq K}  \int_{\Y} |f_\e-f|\circ a_i \, d\mu_{\Y} \nonumber \\
     &\quad + \frac{1}{\sqrt{N}}\sum_{i>K} |\mu_{\Y}(f_\e \circ a_i) - \gamma| \nonumber\\
    &\ll \frac{K}{\sqrt{N}} +  \sqrt{N} \left( \e + e^{-\delta K} \e^{-l} \right)   \longrightarrow 0, \label{eq: approx is good}
\end{align}
where in the last inequality we have used Lemmas \ref{lem: bounded f}, \ref{lem: approximation f error}, \ref{lem: integral fe Y} and the definition of $\mu_{\Y}$. Equation \eqref{eq: approx is good} implies that $F_N$ and $\tF_N$ will have the same convergence in distribution as $N \rightarrow \infty$.  Hence, if we can prove the CLT for $(\tF_N)$, then the CLT for $(F_N)$ would follow.

\subsection{Method of Cumulants}
Let $(\z, \nu)$ be a probability space. Given bounded measurable functions $\Psi_1, \ldots , \Psi_r$ on $\z$, we define their \textit{joint cumulant} as $$\cum_{\nu}^{(r)}(\Psi_1, \ldots, \Psi_r) = \sum_{\Pp} (-1)^{|\Pp|-1} (|\Pp|-1)! \prod_{I \in \Pp} \int_{\z} \left( \prod_{i \in I} \Psi_i \right) \, d\nu, $$ where the sum is taken over all partitions $\Pp $ of the set $ \{1, \ldots , r\}.$ For a bounded measurable function $\Psi$ on $\z$, we also set $$\cum^{(r)}_\nu(\Psi) = \cum^{(r)}_\nu (\Psi, \ldots, \Psi).$$ We have the following classical CLT-criterion (see, for instance, \cite{FS}, \cite[Prop.~5.1]{BG2}).

\begin{prop}
\label{Method of Cumulants}
 Let $(G_T)$ be a sequence of real-valued bounded measurable functions such that $$\int_{\z} G_T \, d\nu=0 \ \text{ and } \ \sigma^2:= \lim_{T \rightarrow \infty} G_T^2 \, d\nu < \infty$$ and $$\lim_{T \rightarrow \infty} \cum^{(r)}_\nu(G_T) = 0, \text{   for all } r \geq 3.$$ Then, for every $x \in \R$, $$\nu(\{ G_T < x \}) \rightarrow \mathrm{Norm}_{\sigma}(x) \text{ as } T \rightarrow \infty.$$ 
    
\end{prop}

We will prove a CLT for $\tF_N$ with the help of Proposition \ref{Method of Cumulants}, with $\z= \Y$, $\nu = \mu_{\Y}$.

\subsection{Estimating Cumulants}
\label{subsec: Cumulants}
Fix $r > 2$. Using the computation in \cite[Section~3.2]{BG}, and replacing Corollary~\ref{mix hom} in place of \cite[Corollary~3.3]{BG} in the relevant steps, equation (3.25) of \cite{BG} yields
\begin{align}
\label{eq:cum evaluation}
    \left|\cum^{(r)}_{\mu_\Y}(\tF_N) \right| &\ll \left( (\beta_{r+1} +1)^rN^{-r/2} + \left(\max_j \alpha_j^{r-1} \right) N^{1-r/2} \right) \|f_\e\|^r_{C^0} \\
    &+ N^{r/2} \left( \max_j e^{-(\delta_r' \beta_{j+1} - r \xi \alpha_j)} \right) \|f_\e\|^r_{C^k},
\end{align}
where $\delta_r'$ is defined as in Corollary~\ref{mix hom} and $\alpha_i, \beta_j$ are any real numbers satisfying
\begin{align}
\label{eq: cumulants 1}
    0= \alpha_0 <\beta_1 < \alpha_1 =(3+r)\beta_1 &< \beta_2 < \cdot < \beta_r< \alpha_r= (3+r)\beta_r < \beta_{r+1}, \\
\label{eq: cumulants 2}
   \delta_r' \beta_{j+1} - r \xi \alpha_j &>0, \quad \text{ for } j=1, \ldots, r.
\end{align}
First of all note that
\begin{align}
    \label{eq: cumulants 3}
    \|f_\e\|^r_{C^0} &\leq M^r \ll 1 \quad \text{using Lemma \ref{lem: bounded f},} \\
    \label{eq: cumulants 4}
    \|f_\e\|^r_{C^k} &\ll  \e^{-lr} \quad \text{using \eqref{eq: norm of tilde f}}.
\end{align}
Now let $\kappa = N^{1/16}$, and define the parameters $\beta_j$ inductively by the formula
\begin{align}
    \label{eq:cumulants 5}
    \beta_{j+1}= \max\left\{\kappa + (3+r)\beta_j , \kappa+ (\delta_r')^{-1}r  (3+r)\xi \beta_j \right\}.
\end{align}
Clearly then \eqref{eq: cumulants 1} and \eqref{eq: cumulants 2} are satisfied. Also, it easily follows by induction that $\beta_{r+1} \ll_r \kappa$, and we deduce from \eqref{eq:cum evaluation} and \eqref{eq: cumulants 3}, \eqref{eq: cumulants 4} that
\begin{align*}
    \left|\cum^{(r)}_{\mu_\Y}(\tF_N) \right| &\ll (\kappa+1)^{r}N^{-r/2} + \kappa^{r-1} N^{1-r/2} + N^{r/2}e^{-\delta_r' \kappa} \e^{-lr}.
\end{align*}
Using fact that $\e= N^{-100}$ and $\kappa = N^{1/16}$, we conclude for $r >2$  that
\begin{align}
    \label{eq: cum equal zero}
    \lim_{N \rightarrow \infty} \cum^{(r)}_{\mu_\Y}(\tF_N)  = 0.
\end{align}

\subsection{Estimating variance}

For notational simplicity, we define
\begin{align}
    \label{eq: def psi}
    \psi_{s}^{(N)}(\Lambda)&= f_\e\circ a_s(\Lambda) - \mu_\Y(f_\e\circ a_s), \quad \text{for $\e= 1/N^{100}$}
\end{align}

We will need the following lemmas for computing the variance of $\tilde{F}_N$.

\begin{lem}
    \label{lem: var 1}
    For $s, t \geq 0$ and $N \in \N$, we have
    \begin{align}
        \int_{\Y} \psi_{s+t}^{(N)}(\Lambda) \psi_{t}^{(N)}(\Lambda) \, d\mu_{\Y}(\Lambda) = O(e^{-\delta \min\{s,t\}} N^{200 l} ),
    \end{align}
    where $\delta$ is defined as in \eqref{eq: def delta}, and $l$ is as in \eqref{eq: def l}.
\end{lem}
\begin{proof}
    Using Corollary \ref{mix hom} and \eqref{eq: norm of tilde f}, we get that for $i=s+t,t$
    \begin{align}
        \mu_\Y(f_\e\circ a_{i}) &= \mu_{\X}(f_\e) + O(e^{-\delta t} \|f_\e\|_{C^k})= \mu_{\X}(f_\e) + O(e^{-\delta t} \e^{-l} ). \label{eq: lem var 1 1}
    \end{align}
    Corollary \ref{mix hom} and \eqref{eq: norm of tilde f} also gives that
    \begin{align}
        \int_\Y f_\e(a_{s+t} \Lambda) f_\e(a_{t}\Lambda) \, d\mu_{\Y}(\Lambda) &= \mu_{\X}(f_\e)^2 + O(e^{-\delta \min\{s,t\}} \|f_\e\|_{C^k}^2) \nonumber \\
        &= \mu_{\X}(f_\e)^2 + O(e^{-\delta\min\{s,t\}} \e^{-2l}). \label{eq: lem var 1 2}
    \end{align}
   Since
    \begin{align}
    \label{eq: lem var 1 3}
        \int_{\Y} \psi_{s+t}^{(N)}(\Lambda) \psi_{t}^{(N)}(\Lambda) \, d\mu_{\Y}(\Lambda) = \int_\Y f_\e(a_{s+t} \Lambda)  f_\e(a_{t}\Lambda) \, d\mu_{\Y}(\Lambda)  - \mu_\Y(f_\e\circ a_{s+t}) \cdot\mu_\Y(f_\e\circ a_{t}),
    \end{align}
    the lemma follows from \eqref{eq: lem var 1 1} and \eqref{eq: lem var 1 2} and the fact that $\e = N^{-100}$.
\end{proof}

\begin{lem}
    \label{lem: var 2}
    For $s, t \geq 0$ and $N \in \N$, we have
    \begin{align*}
        \int_{\Y} \psi_{s+t}^{(N)}(\Lambda) \psi_{t}^{(N)}(\Lambda) \, d\mu_{\Y}(\Lambda)= \int_{\X}f_\e(a_s\Lambda)f_\e(\Lambda) \, d\mu_{\X}(\Lambda) - \mu_{\X}(f_\e)^2 +  O(e^{-\delta t+ \xi s} N^{200 l} ),
    \end{align*}
    where $\e= N^{-100}$, $\delta$ is defined as in \eqref{eq: def delta}, and $l$ is as in \eqref{eq: def l}.
\end{lem}
\begin{proof}
    Using Corollary \ref{mix hom}, we have
    \begin{align}
        \int_\Y f_\e (a_{s+t} \Lambda)  f_\e(a_{t}\Lambda) \, d\mu_{\Y}(\Lambda)  &= \int_{\X} f_\e (a_{s} \Lambda)  f_\e(\Lambda) \, d\mu_{\X}(\Lambda) + O(e^{-\delta t} \|(f_\e \circ a_s )f_\e\|_{C^k}) \nonumber\\
        &= \int_{\X} f_\e (a_{s} \Lambda)  f_\e(\Lambda) \, d\mu_{\X}(\Lambda) + O(e^{-\delta t}  e^{\xi s} \|f_\e\|_{C^k}^2). \label{eq: var 2 1}
    \end{align}
    The lemma now follows from \eqref{eq: norm of tilde f}, \eqref{eq: lem var 1 1}, \eqref{eq: lem var 1 3} and \eqref{eq: var 2 1}. 
\end{proof}

We now compute the variance.
\begin{lem}
    \label{lem:varaince estimate}
    The following holds
    \begin{align}
        \lim_{N \rightarrow \infty} \int_{\Y} \tilde{F}_N^2(\Lambda)\, d\mu_{\Y}(\Lambda) = \Xi(0) + 2\sum_{s=1}^\infty \Xi(s) = \sum_{s \in \Z} \Xi(s) < \infty,
    \end{align}
    where 
    $$
    \Xi(s)= \int_{\X} f(a_s\Lambda) f(\Lambda)\, d\mu_\X(\Lambda) - \mu_{\X}(f)^2.
    $$
\end{lem}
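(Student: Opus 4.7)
The plan is to expand the square $\tilde{F}_N^2$ as a double sum, split into a diagonal and off-diagonal part, use Lemmas \ref{lem: var 1}, \ref{lem: var 2}, \ref{lem: var 3}, \ref{lem: var 4} to replace the correlation integrals with $\Xi(s)$ plus controllable errors, and pass to the limit. Along the way we exploit $\Xi(s) = \Xi(-s)$, which follows from the $a_{-s}$-invariance of $\mu_\X$ (a change of variables inside the defining integral).

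First I would write
\[
\int_{\Y} \tilde{F}_N^2 \, d\mu_{\Y} = \frac{1}{N} \sum_{i=K}^{N-1} \int_\Y (\psi_i^{(N)})^2 \, d\mu_\Y + \frac{2}{N} \sum_{s=1}^{N-1-K}\; \sum_{t=K}^{N-1-s} \int_\Y \psi_{t+s}^{(N)} \psi_t^{(N)} \, d\mu_\Y.
\]
The diagonal term is handled by applying Lemma \ref{lem: var 2} with $s=0$ together with Lemma \ref{lem: var 3}, giving $\int_\Y(\psi_t^{(N)})^2\,d\mu_\Y = \Xi(0) + O(N^{-100}) + O(e^{-\delta t} N^{200l})$; since $t\ge K \gg \log N$, the error term sums to $o(1)$ after dividing by $N$, so the diagonal contributes $\Xi(0) + o(1)$.

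For the off-diagonal part I would split the sum over $s$ at a threshold $S := \lfloor cK\rfloor$ with $c>0$ chosen so that $\xi s \le \tfrac{\delta}{2}t$ whenever $s\le S$ and $t\ge K$. For $s\le S$, Lemma \ref{lem: var 2} and Lemma \ref{lem: var 3} give
\[
\int_\Y \psi_{t+s}^{(N)}\psi_t^{(N)}\,d\mu_\Y = \Xi(s) + O(N^{-100}) + O\bigl(e^{-\delta t/2} N^{200l}\bigr),
\]
so the $s\le S$ portion equals $\tfrac{2}{N}\sum_{s=1}^{S}(N-K-s)\Xi(s) + o(1)$. Using the exponential decay $\Xi(s) \ll e^{-\delta s/(2l+1)}$ from Lemma \ref{lem: var 4}, the coefficients $(N-K-s)/N$ tend to $1$ uniformly in $s\le S$, and one can replace the finite sum by $\sum_{s=1}^\infty \Xi(s)$ at the cost of $o(1)$. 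For $s>S$, one has $\min(s,t)\ge cK$, so Lemma \ref{lem: var 1} gives $|\int_\Y \psi_{t+s}^{(N)}\psi_t^{(N)}\,d\mu_\Y| \ll e^{-\delta cK} N^{200l}$, which after multiplying by the number of pairs (at most $N^2$) and dividing by $N$ remains super-polynomially small since $K\gg \log N$.

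Combining the three contributions yields
\[
\int_{\Y}\tilde{F}_N^2\,d\mu_{\Y} = \Xi(0) + 2\sum_{s=1}^{\infty} \Xi(s) + o(1),
\]
and the symmetry $\Xi(s)=\Xi(-s)$ rewrites this as $\sum_{s\in\Z}\Xi(s)$, with absolute convergence guaranteed by Lemma \ref{lem: var 4}. The main obstacle is the bookkeeping of error terms: the cutoff $K\asymp N^{1/2}/\log N$ and the smoothing parameter $\e=N^{-100}$ were chosen precisely so that the polynomial blow-ups $N^{200l}$ in Lemmas \ref{lem: var 1} and \ref{lem: var 2} are dominated by the exponential decay $e^{-\delta K}$, and so that the $O(\e)$ Siegel/Rogers-type errors are summable. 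Choosing the intermediate threshold $S = cK$ correctly (small enough that $\xi s \ll \delta t$ in the smooth estimate, yet large enough to exhaust the polynomial tail once one switches to the crude mixing bound) is where the care is concentrated.
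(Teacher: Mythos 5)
Your proposal is correct and follows essentially the same route as the paper: the same expansion of $\tilde{F}_N^2$ into diagonal and off-diagonal sums, the same split of the off-diagonal range at a threshold $s \asymp \delta K/(2\xi)$ (your $S=\lfloor cK\rfloor$), with Lemma \ref{lem: var 1} handling large $s$, Lemmas \ref{lem: var 2} and \ref{lem: var 3} handling small $s$, and Lemma \ref{lem: var 4} plus dominated convergence giving the passage to $\sum_{s=1}^\infty \Xi(s)<\infty$. The bookkeeping of the $N^{200l}$ factors against $e^{-cK}$ with $K\asymp N^{1/2}/\log N$ matches the paper's estimates, so no gap.
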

\begin{proof}
    Note that
    \begin{align}
        \int_{\Y} \tilde{F}_N^2(\Lambda)\, d\mu_{\Y}(\Lambda) &= \frac{1}{N} \sum_{i,j=K}^{N-1} \int_{\Y} \psi_{i}^{(N)} \psi_{j}^{(N)} \, d\mu_\Y \nonumber \\
        &= \frac{1}{N} \sum_{t=K}^{N-1} \int_{\Y} (\psi_{t}^{(N)})^2 \, d\mu_\Y + 2 \sum_{s=1}^{N-K-1} \left(  \frac{1}{N} \sum_{t=K}^{N-1-s} \int_{\Y} \psi_{s+t}^{(N)} \psi_{t}^{(N)} \, d\mu_\Y \right). \label{var comp 1}
    \end{align}
    Now using Lemma \ref{lem: var 1}, we have for $t \geq K$ and $s \geq \delta K/(2\xi)$
    \begin{align}
        \frac{1}{N} \sum_{t=K}^{N-1-s} \int_{\Y} \psi_{s+t}^{(N)} \psi_{t}^{(N)} \, d\mu_\Y \ll e^{-\alpha K} N^{200 l}, \label{var comp 2}
    \end{align}
    where $\alpha = \min\{ \delta/2,\delta^2/(2\xi) \}$. Using Lemma \ref{lem: var 2} and \ref{lem: var 3}, we get for $s< \delta K/(2\xi)$ and $t \geq K$
    \begin{align}
        \int_{\Y} \psi_{s+t}^{(N)} \psi_{t}^{(N)} \, d\mu_\Y =\Xi(s) +O\left( \frac{1}{N^{100}} + e^{-\alpha K} N^{200 l} \right). \label{var comp 3}
    \end{align}
    Splitting the sum in \eqref{var comp 1} for $s \geq \delta K/(2\xi)$ and $s< \delta K/(2\xi)$, and using \eqref{var comp 2}, \eqref{var comp 3}, we get that
     \begin{align}
        \int_{\Y} \tilde{F}_N^2(\Lambda)\, d\mu_{\Y}(\Lambda) &= \frac{N-(K+1)}{N}\  \Xi(0) +  2\sum_{\substack{1 \leq s < \delta K/(2\xi)}} \frac{N-(K+s+1)}{N}\  \Xi(s) \nonumber \\
        &+ O\left( \frac{K}{N^{100}} + K e^{-\alpha K} N^{200 l} +e^{-\alpha K} N^{200l + 1}  \right). \label{var comp 4}
    \end{align}
    Note that using Lemma \ref{lem: var 4}, we have $\Xi(s) \ll e^{\frac{-\delta s}{2l+1}} $ for $s \geq 0$. Therefore by the dominated convergence theorem, we have
    \begin{align}
        \lim_{N \rightarrow \infty} \sum_{\substack{1 \leq s <  \delta K/(2\xi)}} \frac{N-(K+s+1)}{N}\  \Xi(s) = \sum_{s=1}^\infty \Xi(s) <\infty. \label{var comp 5}
    \end{align}
    The lemma now follows from \eqref{var comp 4} and \eqref{var comp 5}.
\end{proof}

\subsection{Proof of Theorem \ref{thm: main abstract theorem}(ii)}
\begin{proof}[Proof of Theorem \ref{thm: main abstract theorem}(ii)]
    Using Section \ref{subsec: approximation} and Theorem \ref{Method of Cumulants}, the second part of Theorem \ref{thm: main abstract theorem} holds if and only if
    \begin{align*}
        \int_{\Y} \tilde{F}_N \, d\mu_{\Y} &=0, \\
        \lim_{N \rightarrow \infty} \int_\Y \tilde{F}_N^2 \,d\mu_\Y &= \sum_{s\in \Z}^\infty \Xi(s)\\
        \lim_{N \rightarrow \infty} \cum^{(r)}(\tilde{F}_N) &= 0, \text{   for all } r \geq 3.
    \end{align*}
    The above equations holds by Section \ref{subsec: Cumulants} and Lemma \ref{lem:varaince estimate}. Hence the theorem holds.
\end{proof}

\section*{Statements and Declarations}

\noindent\textbf{Funding.} A.\ G.\ gratefully acknowledges support from a grant from the Infosys Foundation to the Infosys Chandrasekharan Random Geometry Centre. 
G.\ A.\ and A.\ G.\ gratefully acknowledge a grant from the Department of Atomic Energy, Government of India, under project 12-R\&D-TFR-5.01-0500. 
This work was supported by a Royal Society International Exchanges Grant. 
Part of the work was done when G.\ A.\ was visiting the University of Bristol on a visit funded by this grant; the support of the grant and the hospitality of the University of Bristol are gratefully acknowledged. \\

\noindent\textbf{Competing Interests.} The authors declare no competing interests. \\

\noindent\textbf{Data Availability.} Data sharing not applicable to this article as no datasets were generated or analyzed. \\

\noindent\textbf{Acknowledgements.} G.\ A.\ thanks the Hausdorff Center for Mathematics for its hospitality and M.\ Einsiedler for helpful conversations. 
We thank Yann Bugeaud, Jens Marklof, and Subhajit Goswami for helpful discussions. 
The question of proving an effective version of the L\'evy–Khintchine theorem and central limit theorems was raised by Sary Drappeau during a talk by A.\ G.\ on our joint work on Cheung’s conjecture at the Annual Discussion Meeting in Analytic Number Theory, ISI Kolkata, February 2025. 
We are grateful to Sary for posing the question and to the organizers for the invitation and hospitality. The authors would also like to thank the anonymous referee for a very careful reading and comments which have improved the clarity of the paper.

\bibliography{Biblio}

@article {BG,
    AUTHOR = {Bj\"{o}rklund, Michael and Gorodnik, Alexander},
     TITLE = {Central limit theorems for {D}iophantine approximants},
   JOURNAL = {Math. Ann.},
  FJOURNAL = {Mathematische Annalen},
    VOLUME = {374},
      YEAR = {2019},
    NUMBER = {3-4},
     PAGES = {1371--1437},
      ISSN = {0025-5831},
   MRCLASS = {11K60 (37A17 60F05)},
  MRNUMBER = {3985114},
MRREVIEWER = {Thomas Ward},
       DOI = {10.1007/s00208-019-01828-1},
       URL = {https://doi.org/10.1007/s00208-019-01828-1},
}

@article {BG2,
    AUTHOR = {Bj\"{o}rklund, Michael and Gorodnik, Alexander},
     TITLE = {Central limit theorems for group actions which are
              exponentially mixing of all orders},
   JOURNAL = {J. Anal. Math.},
  FJOURNAL = {Journal d'Analyse Math\'{e}matique},
    VOLUME = {141},
      YEAR = {2020},
    NUMBER = {2},
     PAGES = {457--482},
      ISSN = {0021-7670},
   MRCLASS = {37A25 (37D40 60F05)},
  MRNUMBER = {4179768},
MRREVIEWER = {Hasan Ak\i n},
       DOI = {10.1007/s11854-020-0106-7},
       URL = {https://doi.org/10.1007/s11854-020-0106-7},
}

@article {rog,
    AUTHOR = {Rogers, C. A.},
     TITLE = {Mean values over the space of lattices},
   JOURNAL = {Acta Math.},
  FJOURNAL = {Acta Mathematica},
    VOLUME = {94},
      YEAR = {1955},
     PAGES = {249--287},
      ISSN = {0001-5962},
   MRCLASS = {10.3X},
  MRNUMBER = {75243},
MRREVIEWER = {D. Derry},
       DOI = {10.1007/BF02392493},
       URL = {https://doi.org/10.1007/BF02392493},
}

@article {sie,
    AUTHOR = {Siegel, Carl Ludwig},
     TITLE = {A mean value theorem in geometry of numbers},
   JOURNAL = {Ann. of Math. (2)},
  FJOURNAL = {Annals of Mathematics. Second Series},
    VOLUME = {46},
      YEAR = {1945},
     PAGES = {340--347},
      ISSN = {0003-486X},
   MRCLASS = {10.0X},
  MRNUMBER = {12093},
MRREVIEWER = {D. Derry},
       DOI = {10.2307/1969027},
       URL = {https://doi.org/10.2307/1969027},
}

@article {DFV,
    AUTHOR = {Dolgopyat, Dmitry and Fayad, Bassam and Vinogradov, Ilya},
     TITLE = {Central limit theorems for simultaneous {D}iophantine
              approximations},
   JOURNAL = {J. \'{E}c. polytech. Math.},
  FJOURNAL = {Journal de l'\'{E}cole polytechnique. Math\'{e}matiques},
    VOLUME = {4},
      YEAR = {2017},
     PAGES = {1--36},
      ISSN = {2429-7100},
   MRCLASS = {60F05 (11K60 37A17)},
  MRNUMBER = {3583273},
       DOI = {10.5802/jep.37},
       URL = {https://doi.org/10.5802/jep.37},
}

@article {S60,
    AUTHOR =  {Schmidt, Wolfgang M.},
     TITLE = {A metrical theorem in diophantine approximation},
   JOURNAL = {Canadian J. Math.},
  FJOURNAL = {Canadian J. Math.},
    VOLUME = {12},
      YEAR = {1960},
}

@article {KM3,
    AUTHOR = {Kleinbock, D. Y. and Margulis, G. A.},
     TITLE = {Logarithm laws for flows on homogeneous spaces},
   JOURNAL = {Invent. Math.},
  FJOURNAL = {Inventiones Mathematicae},
    VOLUME = {138},
      YEAR = {1999},
    NUMBER = {3},
     PAGES = {451--494},
      ISSN = {0020-9910},
   MRCLASS = {37C85 (11J99 22E40 37A30 37D30 37D40)},
  MRNUMBER = {1719827},
MRREVIEWER = {S. G. Dani},
       DOI = {10.1007/s002220050350},
       URL = {https://doi.org/10.1007/s002220050350},
}

@article{SW22,
      title={Geometric and arithmetic aspects of approximation vectors}, 
      author={Uri Shapira and Barak Weiss},
      year={2022},
      eprint={2206.05329},
      archivePrefix={arXiv},
      primaryClass={math.NT}
}

@article {WYYK,
    AUTHOR = {Wang, Yuan and Yu, Kun Rui},
     TITLE = {A note on some metrical theorems in {D}iophantine
              approximation},
   JOURNAL = {Chinese Ann. Math.},
  FJOURNAL = {Chinese Annals of Mathematics. Shuxue Niankan},
    VOLUME = {2},
      YEAR = {1981},
    NUMBER = {1},
     PAGES = {1--12},
   MRCLASS = {10F10 (10K15)},
  MRNUMBER = {619457},
MRREVIEWER = {William\ W.\ Adams},
       DOI = {10.1007/bf00646384},
       URL = {https://doi.org/10.1007/bf00646384},
}

@article {KSW,
    AUTHOR = {Kleinbock, Dmitry and Shi, Ronggang and Weiss, Barak},
     TITLE = {Pointwise equidistribution with an error rate and with respect
              to unbounded functions},
   JOURNAL = {Math. Ann.},
  FJOURNAL = {Mathematische Annalen},
    VOLUME = {367},
      YEAR = {2017},
    NUMBER = {1-2},
     PAGES = {857--879},
      ISSN = {0025-5831,1432-1807},
   MRCLASS = {28A33 (22E40 37C85)},
  MRNUMBER = {3606456},
MRREVIEWER = {Thomas\ Ward},
       DOI = {10.1007/s00208-016-1404-3},
       URL = {https://doi.org/10.1007/s00208-016-1404-3},
}

@article {CC19,
    AUTHOR = {Cheung, Yitwah and Chevallier, Nicolas},
     TITLE = {L\'evy-{K}hintchin theorem for best simultaneous {D}iophantine
              approximations},
   JOURNAL = {Ann. Sci. \'Ec. Norm. Sup\'er. (4)},
  FJOURNAL = {Annales Scientifiques de l'\'Ecole Normale Sup\'erieure.
              Quatri\`eme S\'erie},
    VOLUME = {57},
      YEAR = {2024},
    NUMBER = {1},
     PAGES = {185--240},
      ISSN = {0012-9593,1873-2151},
   MRCLASS = {11J13 (11J83 11K55 37A44)},
  MRNUMBER = {4732677},
}

@article{AG23,
      title={Two Central limit theorems in Diophantine approximation}, 
      author={Gaurav Aggarwal and Anish Ghosh},
      year={2023},
      eprint={2306.02304},
      archivePrefix={arXiv},
      primaryClass={math.NT}
}

@article{AG24Levy,
      title={Generalized {L\'{e}vy-Khintchine} Theorems and a Conjecture of {Y. Cheung}}, 
      author={Gaurav Aggarwal and Anish Ghosh},
      year={2025},
      eprint={2408.15683},
      archivePrefix={arXiv},
      primaryClass={math.NT},
      url={https://arxiv.org/abs/2408.15683}, 
}

@article{aggarwalghosh2024joint,
      title={Counting and Joint equidistribution of approximates}, 
      author={Gaurav Aggarwal and Anish Ghosh},
      year={2024},
      eprint={2401.02747},
      archivePrefix={arXiv},
      primaryClass={math.NT},
      url={https://arxiv.org/abs/2401.02747}, 
}

@article {Khintchine36,
    AUTHOR = {Khintchine, A.},
     TITLE = {Zur metrischen {K}ettenbruchtheorie},
   JOURNAL = {Compositio Math.},
  FJOURNAL = {Compositio Mathematica},
    VOLUME = {3},
      YEAR = {1936},
     PAGES = {276--285},
      ISSN = {0010-437X,1570-5846},
   MRCLASS = {99-04},
  MRNUMBER = {1556944},
       URL = {http://www.numdam.org/item?id=CM_1936__3__276_0},
}

@article {Levy36,
    AUTHOR = {L\'evy, Paul},
     TITLE = {Sur le d\'eveloppement en fraction continue d'un nombre choisi
              au hasard},
   JOURNAL = {Compositio Math.},
  FJOURNAL = {Compositio Mathematica},
    VOLUME = {3},
      YEAR = {1936},
     PAGES = {286--303},
      ISSN = {0010-437X,1570-5846},
   MRCLASS = {99-04},
  MRNUMBER = {1556945},
       URL = {http://www.numdam.org/item?id=CM_1936__3__286_0},
}

@article{cheung2021valuedimensionallevysconstant,
      title={About the value of the two dimensional Levy's constant}, 
      author={Yitwah Cheung and Nicolas Chevallier},
      year={2021},
      eprint={2107.01907},
      archivePrefix={arXiv},
      primaryClass={math.NT},
      url={https://arxiv.org/abs/2107.01907}, 
}

@article{aggarwalghosh25,
      title={The {Doeblin-Lenstra} conjecture: effective results and central limit theorems}, 
      author={Gaurav Aggarwal and Anish Ghosh},
      year={2025},
      eprint={2507.19084},
      archivePrefix={arXiv},
      primaryClass={math.NT},
      url={https://arxiv.org/abs/2507.19084}, 
}

@article{Misevicius,
 author = {Misevi{\v{c}}ius, G.},
 title = {Estimate of the remainder term in the limit theorem for denominators of continued fractions},
 fjournal = {Lithuanian Mathematical Journal},
 journal = {Lith. Math. J.},
 issn = {0363-1672},
 volume = {21},
 pages = {245--253},
 year = {1981},
 language = {English},
 doi = {10.1007/BF01116883},
 keywords = {11K50,11J70},
 zbMATH = {3777614},
 Zbl = {0494.10039}
}

@article{PhilippStackelberg,
 author = {Philipp, W. and Stackelberg, O. P.},
 title = {Zwei {Grenzwerts{\"a}tze} f{\"u}r {Kettenbr{\"u}che}},
 fjournal = {Mathematische Annalen},
 journal = {Math. Ann.},
 issn = {0025-5831},
 volume = {181},
 pages = {152--156},
 year = {1969},
 language = {German},
 doi = {10.1007/BF01350634},
 keywords = {11K50,60F99},
 url = {https://eudml.org/doc/161825},
 zbMATH = {3262465},
 Zbl = {0164.05602}
}

@article {FS,
    AUTHOR = {Fr\'echet, M. and Shohat, J.},
     TITLE = {A proof of the generalized second-limit theorem in the theory
              of probability},
   JOURNAL = {Trans. Amer. Math. Soc.},
  FJOURNAL = {Transactions of the American Mathematical Society},
    VOLUME = {33},
      YEAR = {1931},
    NUMBER = {2},
     PAGES = {533--543},
      ISSN = {0002-9947,1088-6850},
   MRCLASS = {60F05 (44A60 60E05)},
  MRNUMBER = {1501604},
       DOI = {10.2307/1989421},
       URL = {https://doi.org/10.2307/1989421},
}

@article{PollicottWeiss,
 author = {Pollicott, Mark and Weiss, Howard},
 title = {Multifractal analysis of {Lyapunov} exponent for continued fraction and {Manneville}-{Pomeau} transformations and applications to diophantine approximation},
 fjournal = {Communications in Mathematical Physics},
 journal = {Commun. Math. Phys.},
 issn = {0010-3616},
 volume = {207},
 number = {1},
 pages = {145--171},
 year = {1999},
 language = {English},
 doi = {10.1007/s002200050722},
 keywords = {37D25,11K60,37E05,37D40,37C45,11K55},
 zbMATH = {1415880},
 Zbl = {0960.37008}
}

@article{BugeaudKimLee,
 author = {Bugeaud, Yann and Kim, Dong Han and Lee, Seul Bee},
 title = {On the {L{\'e}vy} constants of {Sturmian} continued fractions},
 fjournal = {Pacific Journal of Mathematics},
 journal = {Pac. J. Math.},
 issn = {1945-5844},
 volume = {315},
 number = {1},
 pages = {1--25},
 year = {2021},
 language = {English},
 doi = {10.2140/pjm.2021.315.1},
 keywords = {11A55,68R15},
 zbMATH = {7450410},
 Zbl = {1491.11009}
}

@article{Morita,
 author = {Morita, Takehiko},
 title = {Local limit theorem and distribution of periodic orbits of {Lasota}-{Yorke} transformations with infinite {Markov} partition},
 fjournal = {Journal of the Mathematical Society of Japan},
 journal = {J. Math. Soc. Japan},
 issn = {0025-5645},
 volume = {46},
 number = {2},
 pages = {309--343},
 year = {1994},
 language = {English},
 doi = {10.2969/jmsj/04620309},
 keywords = {60F05,60G10},
 zbMATH = {711442},
 Zbl = {0824.60017}
}

@article{Vallee,
 author = {Vall{\'e}e, Brigitte},
 title = {Generalized {Ruelle}-{Mayer} operators and average case analysis of algorithms of {Euclid} and {Gauss}},
 fjournal = {Acta Arithmetica},
 journal = {Acta Arith.},
 issn = {0065-1036},
 volume = {81},
 number = {2},
 pages = {101--144},
 year = {1997},
 language = {French},
 doi = {10.4064/aa-81-2-101-144},
 keywords = {11K55,47A99,11K50},
 url = {https://eudml.org/doc/207058},
 zbMATH = {1039329},
 Zbl = {0880.11059}
}

@article{Takahasi,
 author = {Takahasi, Hiroki},
 title = {Large deviations for denominators of continued fractions},
 fjournal = {Nonlinearity},
 journal = {Nonlinearity},
 issn = {0951-7715},
 volume = {33},
 number = {11},
 pages = {5861--5874},
 year = {2020},
 language = {English},
 doi = {10.1088/1361-6544/ab9a1d},
 keywords = {11K50,60F10,37D35,37A50},
 zbMATH = {7278294},
 Zbl = {1470.11209}
}

@article{Philipp67,
 author = {Philipp, Walter},
 title = {Some metrical theorems in number theory},
 fjournal = {Pacific Journal of Mathematics},
 journal = {Pac. J. Math.},
 issn = {1945-5844},
 volume = {20},
 pages = {109--127},
 year = {1967},
 language = {English},
 doi = {10.2140/pjm.1967.20.109},
 keywords = {11K50,11K55},
 zbMATH = {3232024},
 Zbl = {0144.04201}
}

@article{LeVeque58,
 author = {LeVeque, William J.},
 title = {On the frequency of small fractional parts in certain real sequences},
 fjournal = {Transactions of the American Mathematical Society},
 journal = {Trans. Am. Math. Soc.},
 issn = {0002-9947},
 volume = {87},
 pages = {237--260},
 year = {1958},
 language = {English},
 doi = {10.2307/1993099},
 keywords = {11J54},
 zbMATH = {3138979},
 Zbl = {0085.03403}
}

@article {Ibragimov,
    AUTHOR = {Ibragimov, I. A.},
     TITLE = {A theorem from the metric theory of continued fractions},
   JOURNAL = {Vestnik Leningrad. Univ.},
  FJOURNAL = {Vestnik Leningrad. Univ.},
    VOLUME = {16},
      YEAR = {1961},
    NUMBER = {1},
     PAGES = {13--24},
   MRCLASS = {40.12 (10.31)},
  MRNUMBER = {133619},
MRREVIEWER = {W.\ J.\ Thron},
}

@article {PhilippDuke,
    AUTHOR = {Philipp, Walter},
     TITLE = {Some metrical theorems in number theory. {II}},
   JOURNAL = {Duke Math. J.},
  FJOURNAL = {Duke Mathematical Journal},
    VOLUME = {37},
      YEAR = {1970},
     PAGES = {447--458},
      ISSN = {0012-7094,1547-7398},
   MRCLASS = {10.50},
  MRNUMBER = {272739},
MRREVIEWER = {O.\ P.\ Stackelberg},
       URL = {http://projecteuclid.org/euclid.dmj/1077379124},
}

@article {DFL22,
    AUTHOR = {Dolgopyat, Dmitry and Fayad, Bassam and Liu, Sixu},
     TITLE = {Multiple {B}orel-{C}antelli lemma in dynamics and multilog law
              for recurrence},
   JOURNAL = {J. Mod. Dyn.},
  FJOURNAL = {Journal of Modern Dynamics},
    VOLUME = {18},
      YEAR = {2022},
     PAGES = {209--289},
      ISSN = {1930-5311},
   MRCLASS = {37A50 (11K60 60F20)},
  MRNUMBER = {4447598},
MRREVIEWER = {Hongkun Zhang},
       DOI = {10.3934/jmd.2022009},
       URL = {https://doi.org/10.3934/jmd.2022009},
}

@article {Mahler,
    AUTHOR = {Mahler, Kurt},
     TITLE = {Some suggestions for further research},
   JOURNAL = {Bull. Aust. Math. Soc.},
  FJOURNAL = {Bulletin of the Australian Mathematical Society},
    VOLUME = {29},
      YEAR = {1984},
     PAGES = {101--108},
}

@article{BHZ2025,
      title={Khintchine dichotomy and Schmidt estimates for self-similar measures on $\mathbb{R}^d$}, 
      author={Timothée Bénard and Weikun He and Han Zhang},
      year={2025},
      eprint={2508.09076},
      archivePrefix={arXiv},
      primaryClass={math.DS},
      url={https://arxiv.org/abs/2508.09076}, 
}
\end{document}